\documentclass[11pt,letterpaper]{article}
\usepackage{fullpage}

\pdfoutput=1

\def\TEXPAD{0}

\usepackage{amsmath,amsthm,amssymb}
\usepackage{mathtools}
\usepackage{mathrsfs}
\usepackage[shortlabels]{enumitem}

\usepackage{comment}
\usepackage{hyperref}
\usepackage[nameinlink,capitalise,noabbrev]{cleveref}
\crefformat{equation}{(#2#1#3)}

\usepackage[usenames,dvipsnames]{xcolor}
\usepackage{graphicx}
\usepackage{float}
\usepackage{tikz}
\usetikzlibrary{decorations.pathreplacing,calc}
\colorlet{mygreen}{ForestGreen!60!LimeGreen}
\colorlet{graphyellow}{yellow}
\colorlet{graphblue}{blue!60!cyan}
\colorlet{graphred}{red}
\colorlet{graphgreen}{mygreen}
\colorlet{graphpurple}{violet!90!black}
\colorlet{commentblue}{blue}
\colorlet{commentpurple}{violet}

\usetikzlibrary{calc, arrows.meta, positioning, shapes.geometric, bending, decorations.markings}

\newcommand{\resettikzparameters}{
	\def\pinwheelopacity{0.4}
	\def\circletriangleangle{240}
}
\resettikzparameters

\tikzset{vtx/.style={circle, fill, inner sep=1.5pt}}
\tikzset{openvtx/.style={circle, draw, inner sep=1.5pt}}
\tikzset{
	circle arrow/.style={},
	purple opacity/.style={fill opacity=0.1},
	edge color/.style={},
}

\newcounter{uid}
\tikzset{redvtx/.style={vtx, draw, fill=graphred}}
\tikzset{bluevtx/.style={vtx, draw, fill=graphblue}}
\tikzset{greenvtx/.style={vtx, draw, fill=LimeGreen}}
\tikzset{yellowvtx/.style={vtx, draw, fill=graphyellow}}
\tikzset{whitevtx/.style={vtx, draw, fill=white}}

\tikzset{unlabeled triangle/.style={baseline=0.8ex, scale=0.6}}
\tikzset{labeled triangle/.style={baseline=1ex,scale=0.6,every node/.style={scale=0.7}}}

\newcommand{\tikzmaketriangle}[4][]
{\path (0,0) coordinate (#1#3) -- (1,0) coordinate (#1#4) -- (60:1) coordinate (#1#2);
}
\newcommand{\tikzlabeltriangle}[7][]
{\path (#1#2) node[above=-1pt] {$#5$}
		(#1#3) node[left=-1pt] {$#6$}
		(#1#4) node[right=-1pt] {$#7$};
}

\newcommand{\tikzmakepinwheel}[4][] {
	\path (#1#2) -- (#1#3) coordinate[pos=0.5] (#1#2#3)
			-- (#1#4) coordinate[pos=0.5] (#1#3#4)
			-- (#1#2) coordinate[pos=0.5] (#1#2#4)
			-- (#1#3#4) coordinate[pos=2/3] (#1#2#3#4);
	\if\pinwheelopacity0
	\else
	\fill[draw=none,fill=graphpurple!50!white,fill opacity=\pinwheelopacity]
		(#1#2) -- (#1#2#3#4) -- (#1#2#4) -- (#1#2)
		(#1#3) -- (#1#2#3#4) -- (#1#2#3) -- (#1#3)
		(#1#4) -- (#1#2#3#4) -- (#1#3#4) -- (#1#4);
	\fi
	\def\pinwheelopacity{0}
	\draw[graphpurple, fill=graphpurple, fill opacity=0.8]
		(#1#2) -- (#1#2#3#4) -- (#1#2#3) -- cycle
		(#1#3) -- (#1#2#3#4) -- (#1#3#4) -- cycle
		(#1#4) -- (#1#2#3#4) -- (#1#2#4) -- cycle;
	\draw[graphpurple, line width=0.45pt] (#1#2) -- (#1#3) -- (#1#4) -- cycle;
}

\newcommand{\tikzmakeshiftedvtxs}[5]{
	\stepcounter{uid}
	\coordinate (\theuid med) at ($1/3*(#1)+1/3*(#2)+1/3*(#3)$); 
	\coordinate (\theuid x) at ($(#1)!#5!(\theuid med)+#4$);
	\coordinate (\theuid y) at ($(#2)!#5!(\theuid med)+#4$);
	\coordinate (\theuid z) at ($(#3)!#5!(\theuid med)+#4$);
}
\newcommand{\trianglecommands}[4][]{
\expandafter\newcommand\csname #2\endcsname{
	\tikz[unlabeled triangle, #1] {
		\tikzmaketriangle #4;
		\csname tikzmake#3\endcsname xyz;
}}
\expandafter\newcommand\csname #2labeled\endcsname[3]{
	\tikz[labeled triangle, #1] {
		\tikzmaketriangle #4;
		\tikzlabeltriangle #4{##1}{##2}{##3}
		\csname tikzmake#3\endcsname xyz;
}}
\expandafter\newcommand\csname tikzshifted#2\endcsname[5]{
	\begin{scope}[#1]
	\tikzmakeshiftedvtxs{##1}{##2}{##3}{##4}{##5};
	\csname tikzmake#3\endcsname[\theuid]xyz;
	\end{scope}
}
}

\trianglecommands{plaintriangle}{plaintriangle}{xyz}
\trianglecommands{bluetriangle}{bluetriangle}{xyz}
\trianglecommands{graytriangle}{graytriangle}{xyz}
\trianglecommands{rgbtriangle}{rgbtriangle}{xyz}
\trianglecommands{brgtriangle}{rgbtriangle}{zxy}

\trianglecommands{pointedtriangle}{pointedtriangle}{xyz}
\trianglecommands{pointedtriangleleft}{pointedtriangle}{yxz}
\trianglecommands{pointedtriangleright}{pointedtriangle}{zyx}

\trianglecommands[edge color/.prefix style={graphred},vtx color/.prefix style={graphyellow}]
{rededgetriangle}{edgetriangle}{xyz}
\trianglecommands[edge color/.prefix style={graphyellow},vtx color/.prefix style={graphred}]
{yellowedgetriangle}{edgetriangle}{xyz}
\trianglecommands[edge color/.prefix style={graphred},vtx color/.prefix style={graphyellow}]
{rededgetriangleright}{edgetriangle}{yxz}
\trianglecommands[edge color/.prefix style={graphyellow},vtx color/.prefix style={graphred}]
{yellowedgetriangleright}{edgetriangle}{yxz}

\trianglecommands{pinwheel}{pinwheel}{xyz}
\trianglecommands{pinwheelrev}{pinwheel}{yxz}
\trianglecommands{circletriangle}{circletriangle}{xyz}
\trianglecommands{circletrianglerev}{circletriangle}{xzy}

\tikzset{unlabeled tetrahedron/.style={baseline=0.4ex,scale=0.6}}
\tikzset{labeled tetrahedron/.style={baseline=0.42ex,scale=0.6,every node/.style={scale=0.7}}}

\newcommand{\tikzmaketetrahedron}[5][]
{\path (0,0.05) coordinate (#1#3)
	(0.75,-0.2) coordinate (#1#4)
	(1,0.33) coordinate (#1#5)
	(0.5,0.85) coordinate (#1#2);
}
\newcommand{\tikzlabeltetrahedron}[9][]
{\path (#1#2) node[above=-1.3pt] {$#6$}
	(#1#3) node[left=-1pt] {$#7$}
	(#1#4) node[below right=-3pt] {$#8$}
	(#1#5) node[right=-1pt] {$#9$};
}

\newcommand{\tetrahedroncommands}[4][]{
\expandafter\newcommand\csname #2\endcsname{
	\tikz[unlabeled tetrahedron, #1] {
		\tikzmaketetrahedron wxyz;
		\csname tikzmake#3\endcsname #4;
}}
\expandafter\newcommand\csname #2labeled\endcsname[4]{
	\tikz[labeled tetrahedron, #1] {
		\tikzmaketetrahedron wxyz;
		\tikzlabeltetrahedron wxyz{##1}{##2}{##3}{##4};
		\csname tikzmake#3\endcsname #4;
}}
}
\tetrahedroncommands{pointedtetrahedron}{pointedtetrahedron}{wxyz}
\tetrahedroncommands{pointedtetrahedrontwisted}{pointedtetrahedron}{yxwz}
\tetrahedroncommands[edge1/.style={graphyellow}, edge2/.style=graphred]
	{edgetetrahedron}{edgetetrahedron}{wxyz}
\tetrahedroncommands[edge1/.style={graphred}, edge2/.style=graphyellow]
	{edgetetrahedronflipped}{edgetetrahedron}{wxyz}
\tetrahedroncommands{circletetrahedron}{circletetrahedron}{wxyz}

\tikzset{tetrahedron boundary/.style={
	inline/.style={baseline=0.48ex,scale=0.9,vtx/.append style={scale=0.6}, line width=0.4pt},
	big/.style={scale=2.7, line width=0.6pt,
		circle arrow/.append style={>={Latex[angle=50:3pt,scale=1.5]}}},
}}

\tikzset{hash1/.style={postaction={decorate}, 
        decoration={markings, mark=at position 0.5 with {\draw[-, line width=0.5mm] (0,-2pt) -- (0,2pt);}}}}
\tikzset{hash2/.style={postaction={decorate}, 
        decoration={markings,
        mark=at position 0.4 with {\draw[-, line width=0.5mm] (0,-2pt) -- (0,2pt);},
        mark=at position 0.6 with {\draw[-, line width=0.5mm] (0,-2pt) -- (0,2pt);}
        }}}
\tikzset{hash3/.style={postaction={decorate}, 
        decoration={markings,
        mark=at position 0.32 with {\draw[-, line width=0.47mm] (0,-2pt) -- (0,2pt);},
        mark=at position 0.5 with {\draw[-, line width=0.47mm] (0,-2pt) -- (0,2pt);},
        mark=at position 0.68 with {\draw[-, line width=0.47mm] (0,-2pt) -- (0,2pt);}
        }}}
\tikzset{arrow1/.style={>={
	Stealth[width=1.84mm,length=1.2mm]},
		postaction={decorate},
		decoration={markings, mark=at position 0.5 with
			{\draw[->] (-0.6mm,0) -- (0.6mm,0);}}}}
\tikzset{arrow2/.style={>={
	Stealth[width=1.84mm,length=1.2mm]},
		postaction={decorate},
		decoration={markings, mark=at position 0.5 with
			{\draw[->>] (-1.2mm,0) -- (1.2mm,0);}}}}
\tikzset{arrow3/.style={>={
	Stealth[width=1.84mm,length=1mm]},
		postaction={decorate},
		decoration={markings, mark=at position 0.5 with
			{\draw[->>>] (-1.5mm,0) -- (1.5mm,0);}}}}

\tikzset{tikz arrows/.style={
	line width=0.9pt,
	every node/.style={scale=0.8,inner sep=2pt},
	>={Stealth[width=1.84mm, length=2mm]},
	e0/.style={},e1/.style={},e2/.style={},e3/.style={},
red arrow/.style={
	graphred,
	e1/.style={arrow1},
	e2/.style={arrow2},
	e3/.style={arrow3},
	e0/.style={->}
},
blue edge/.style={
	graphblue,
	e1/.style={hash1},
	e2/.style={hash2},
	e3/.style={hash3},
	e0/.style={}
},
bipartite edge/.style={graphblue, >={Stealth[scale=1, red]}, ->}
}}

\newcommand{\arrowcommands}[3][]{
\expandafter\newcommand\csname#2\endcsname[1][#1]{
	\tikz[baseline=-0.6ex, tikz arrows]{
	\draw (0,0) coordinate[vtx] (1);
	\draw (0.75,0) coordinate[vtx] (2);
	\draw[#3,##1] (1) -- (2);
}}
\expandafter\newcommand\csname#2labeled\endcsname[3][#1]{
	\tikz[baseline=-0.6ex, tikz arrows]{
	\path (0,0) coordinate[vtx] (1) node[above] {$##2$};
	\draw (0.75,0) coordinate[vtx] (2) node[above] {$##3$};
	\draw[#3,##1] (1) -- (2);
}}
\expandafter\newcommand\csname#2xy\endcsname[1][#1]
{\csname#2labeled\endcsname[##1]{\vphantom{y}x}{y}}
}

\arrowcommands[e0]{redto}{red arrow}
\arrowcommands[e0]{blueedge}{blue edge}
\arrowcommands{greenedge}{graphgreen}
\arrowcommands{purpleto}{graphpurple, ->}
\arrowcommands{purpleot}{graphpurple, <-}
\arrowcommands{bipedge}{bipartite edge} 

\tikzset{unlabeled vtx triangle/.style={baseline=1ex, scale=0.7}} 
\tikzset{labeled vtx triangle/.style={baseline=1ex, scale=0.7}}

\newcommand{\tikzmakevtxtriangle}[4][]
{\path (0,0) coordinate[vtx] (#3) -- (1,0) coordinate[vtx] (#4) 
	-- (60:1) coordinate[vtx] (#2);
}
\newcommand{\tikzlabelvtxtriangle}[7][]
{\path (#1#2) node[left=0.5pt] {$#5$}
		(#1#3) node[left=0pt] {$#6$}
		(#1#4) node[right=0pt] {$#7$};
}

\newcommand{\triangleboundarycommands}[5][]{
\expandafter\newcommand\csname #2\endcsname[1][#1]{
	\tikz[unlabeled vtx triangle, tikz arrows] {
	\tikzmakevtxtriangle xyz;
	\draw[#3,##1] (y) -- (x);
	\draw[#4,##1] (z) -- (x);
	\draw[#5,##1] (y) -- (z);
}}
\expandafter\newcommand\csname #2labeled\endcsname[4][#1]{
	\tikz[labeled vtx triangle, tikz arrows, ##1] {
		\tikzmakevtxtriangle xyz;
		\tikzlabelvtxtriangle xyz{##2}{##3}{##4};
		\draw[#3,##1] (y) -- (x);
		\draw[#4,##1] (z) -- (x);
		\draw[#5,##1] (y) -- (z);
}}
}

\triangleboundarycommands[e0]{cherry}{red arrow}{red arrow}{blue edge}
\triangleboundarycommands[e0]{cherrypath}{red arrow}{draw=none,}{blue edge}
\triangleboundarycommands{greentri}{graphgreen}{graphgreen}{graphgreen}
\triangleboundarycommands{purpletri}{graphpurple, ->}{graphpurple, <-}{graphpurple, <-}
\triangleboundarycommands{purpletranstri}{graphpurple, ->}{graphpurple, ->}{graphpurple, ->}
\triangleboundarycommands{dirboundary}{bipartite edge}{bipartite edge}{blue edge}

\tikzset{graph/.style={edges/.style={draw=black!60, line width=0.6pt}}}

\newcommand{\tikzC}[1]{%
\ifnum #1=3
\tikz[baseline=-0.4ex,scale=0.18,graph,vtx/.append style={scale=0.65}] {
	\foreach \i in {0,1,2} {
		\coordinate[vtx] (v\i) at (120*\i + 90:1);
	}
	\draw (v0) -- (v1) -- (v2) -- (v0);
}%
\else\ifnum #1=4
\tikz[baseline=-0.7ex,scale=0.2,graph,vtx/.append style={scale=0.65}] {
	\foreach \i in {0,1,2,3} {
		\coordinate[vtx] (v\i) at (90*\i + 45:1);
	}
	\draw (v0) -- (v1) -- (v2) -- (v3) -- (v0);
}%
\else\ifnum #1=5
\tikz[baseline=-0.68ex,scale=0.22,graph,vtx/.append style={scale=0.65}] {
	\foreach \i in {0,1,2,3,4} {
		\coordinate[vtx] (v\i) at (72*\i + 90:1);
	}
	\draw (v0) -- (v1) -- (v2) -- (v3) -- (v4) -- (v0);
}%
\fi\fi\fi}

\newcommand{\tikzK}[1]{%
\ifnum #1=2
\tikz[baseline=-0.45ex,scale=0.18,graph,vtx/.append style={scale=0.65}] {
	\foreach \i in {0,1} {
		\coordinate[vtx] (v\i) at (\i*2,0);
	}
	\draw (v0) -- (v1);
}%
\else\ifnum #1=4
\tikz[baseline=-0.7ex,scale=0.2,graph,vtx/.append style={scale=0.65}] {
	\foreach \i in {0,1,2,3} {
		\coordinate[vtx] (v\i) at (90*\i + 45:1);
	}
	\draw (v0) -- (v1) -- (v2) -- (v3) -- (v0) (v0) -- (v2) (v1) -- (v3);
}
\fi\fi}

\usepackage[numbers]{natbib}

\DeclareTextCompositeCommand{\v}{OT1}{l}{l\nobreak\hspace{-.1em}'}

\theoremstyle{definition}
\newtheorem{definition}{Definition}
\newtheorem{example}[definition]{Example}
\newtheorem{construction}[definition]{Construction}
\newtheorem{remark}[definition]{Remark}

\theoremstyle{plain}

\newtheorem{theorem}[definition]{Theorem}
\newtheorem{lemma}[definition]{Lemma}

\newtheorem{claim}[definition]{Claim}
\newtheorem{subclaim}[definition]{Subclaim}

\if\TEXPAD0
	\crefname{claim}{Claim}{Claims}
	\crefname{lemma}{Lemma}{Lemmas}
\fi

\def \l {\ell}

\def \ex {\mathrm{ex}}
\def \sm {\setminus}
\def \ce {\coloneqq}

\renewcommand{\le}{\leqslant}
\renewcommand{\ge}{\geqslant}
\renewcommand{\leq}{\leqslant}
\renewcommand{\geq}{\geqslant}

\def \eps {\varepsilon}
\def \es {\varnothing}
\renewcommand \b[2] {\binom{#1}{#2}}

\def \mC {\mathcal{C}}

\def \mF{\mathcal{F}}

\def \mH{\mathcal{H}}

\def \K {\mathcal{K}}

\newcommand{\sg}{\sigma}

\newcommand{\del}{\delta}
\newcommand{\Ga}{\Gamma}
\newcommand{\ga}{\gamma}

\newcommand{\texthom}{\text{-}\mathrm{hom}}
\newcommand{\C}[1][\l]{\mathcal{C}^r_{#1}}
\newcommand{\Ch}[1][\l]{\mathcal{C}^r_{#1}\texthom}

\newcommand{\Ckr}{\mathcal{C}_{\equiv k}^r}
\newcommand{\CkLr}{\mathcal{C}_{\equiv k, < L}^r}
\newcommand{\CkLrh}{\mathcal{C}_{\equiv k, < L}^r\texthom}
\newcommand{\Ckrh}{\mathcal{C}_{\equiv k}^r\texthom}
\newcommand{\sit}{S_i \times S_{r-i}}

\newcommand{\pich}{\gamma (\Ckr\texthom)}

\newcommand{\drmh}[1][\mH]{\delta_{r-1}(#1)}
\newcommand{\vx}[1][x]{\vec{#1}}
\newcommand{\vy}{\vec y}
\newcommand{\EdH}[1][\mH]{\vec E (#1)}
\def \fG {\mathfrak{G}}
\newcommand{\cyc}{\mathrm{cyc}}

\newcommand{\hp}{h^+_}
\newcommand{\hm}{h^-_}
\newcommand{\hpm}{h^{\pm}_}
\newcommand{\Gatr}{\Ga_{\mathrm{tr}}}

\title{A Jump in the Codegree Tur\'an Densities of Long Tight Cycles}
\author{
J\'ozsef Balogh\thanks{Department of Mathematics, University of Illinois Urbana-Champaign, Urbana, IL, USA, and Extremal Combinatorics and Probability Group (ECOPRO), Institute for Basic Science (IBS), Daejeon, South Korea. Partially supported by NSF grants RTG DMS-1937241, FRG DMS-2152488, (UIUC  Campus Research Board Award RB26026), the Simons Fellowship, Simons Collaboration grant [SFI-MPS-TSM-00013107, JB], and the Institute for Basic Science (IBS-R029-C4).
Email: \texttt{jobal@illinois.edu}. 
}
\and
Haoran Luo\thanks{Department of Mathematics, Statistics and Computer Science, University of Illinois Chicago, Chicago, Illinois 60607, USA. Research was partially performed while Luo was at the University of Illinois Urbana-Champaign and supported in part by Dr. James J. Woeppel Fellowship. Email: \texttt{haoranl8@uic.edu}. }
\and
Maya Sankar\thanks{School of Mathematics, Institute for Advanced Study, Princeton, New Jersey 08540, USA. Research was partially performed while Sankar was at Stanford University and supported in part by NSF GRFP Grant DGE-1656518 and a Hertz fellowship. Email: \texttt{mayars@ias.edu}.}
}

\date{}

\begin{document}
\maketitle

\begin{abstract}
We study the codegree Tur\'an density of $\mathcal{C}_\ell^r$, the $r$-uniform hypergraph tight cycle of length $\ell$.
A result of Han, Lo, and Sanhueza-Matamala states that if $\ell$ is sufficiently large and $r/\gcd(r,\ell)$ is even, then the codegree Tur\'an density of $\mathcal{C}_\ell^r$ is $1/2$.
We prove that whenever the latter assumption is not satisfied, there is a significant drop in the codegree Tur\'an density. That is, if $\ell$ is sufficiently large and $r/\gcd(r,\ell)$ is odd, then the codegree Tur\'an density of $\mathcal{C}_\ell^r$ can be at most $1/3$.
Moreover, this bound is tight for infinitely many uniformities $r$ and all sufficiently large $\ell$ in the corresponding residue classes modulo $r$.
Our proof makes use of a group-theoretic connection between Tur\'an-type theorems for tight cycles and ``oriented colorings'' of the edge set of a hypergraph.
\end{abstract}

\section{Introduction} \label{sec::Int}
For a family $\mF$ of $r$-uniform hypergraphs (henceforth abbreviated as \emph{$r$-graphs}), the \emph{Tur\'an number} of $\mF$, denoted by $\ex(n,\mF)$, is the maximum number of edges in an $n$-vertex $r$-graph that contains no member of $\mF$ (i.e.,\ is $\mF$-\emph{free}). 
In many cases, it is out of reach to determine this number exactly, and instead, many results focus on determining the asymptotic density of the extremal constructions. This is encapsulated in the \emph{Tur\'an density} of $\mF$, which is defined to be $\pi(\mF) \ce \lim_{n \to \infty} \ex(n,\mF) / \b{n}{r}$; the limit is known to exist. When $\mF = \{\mH\}$, we simply write $\ex(n,\mH)$ and $\pi(\mH)$.

Determining the Tur\'an numbers and Tur\'an densities of $r$-graphs is a central problem in extremal combinatorics. For graphs (the case $r=2$), we have a reasonably good understanding of this problem, as the 
  fundamental theorem of Erd\H{o}s--Stone--Simonovits \cite{erdos1946structure, erdos1966limit} shows that $\pi(\mF)=1-\frac 1{\chi(\mF)-1}$, where $\chi(\mF)$ is the minimum chromatic number of a graph in $\mF$. The lower bound comes from a balanced complete $(\chi(\mF)-1)$-partite graph, which clearly contains no graph of chromatic number at least $\chi(\mF)$. 

Conversely, for hypergraphs (the case $r \ge 3$), understanding Tur\'an densities is a notoriously difficult problem, and very little is known. For example, denote by $\K_4^3$  the complete $3$-graph on $4$ vertices, and $\K_4^{3-}$ the hypergraph obtained by removing a hyperedge from  a $\K_4^3$.  Tur\'an famously  conjectured $\pi(\K_4^3)=5/9$, see~\cite{turan1961research} for proposing the general problem. Later, Frankl and F\"uredi~\cite{frankl1984exact} proved that $\pi(\K_4^{3-})\ge 2/7$, and it is conjectured to be an equality.
Despite considerable efforts and the seeming simplicity of these hypergraphs, both conjectures remain widely open to this day. 
For further discussion of results and techniques in this area, we refer the reader to the surveys by Keevash~\cite{keevash2011hypergraph} and Balogh, Clemen, and Lidick\'y~\cite{balogh2022hypergraph}.

In this paper, we study a variant called \emph{codegree Tur\'an density}, introduced by Mubayi and Zhao~\cite{mubayi2007codegree}. Unlike graphs, hypergraphs admit several different notions of degree. 
In an $r$-graph $\mH$, it is natural to define the degree $d(S)$ of a set $S$ of vertices, which is  the number of edges $e$ containing $S$. Similarly, for each $1\leq i\leq r-1$, we  define the minimum $i$-degree $\delta_i(\mH)$ to be the minimum
$d(S)$ over all   $S$ subsets of vertices of size $i$. In this paper, we are primarily interested in $\delta_{r-1}(\mH)$, which we usually call the \emph{minimum codegree} of a hypergraph $\mH$.

For a family $\mF$ of $r$-graphs, the \emph{codegree Tur\'an number} $\ex_{r-1} (n,\mF)$ is the maximum possible value of $\delta_{r-1}(\mH)$ over all $n$-vertex $\mF$-free $r$-graphs $\mH$. Similarly to the standard Tur\'an number, this quantity is also usually infeasible to  be determined exactly. Instead, we study the \emph{codegree Tur\'an density} $\gamma(\mF)=\lim_{n\to \infty} \ex_{r-1} (n,\mF)/n$, which is  well-defined, see Proposition~1.2 in~\cite{mubayi2007codegree}.

When  $\mF$ consists of a clique, symmetrization arguments imply that $\pi(\mF)=\gamma(\mF)$. For $r\geq 3$,  similarly, the degrees of an extremal hypergraph are differing by at most $O(n^{r-1})$ holds under the analogous assumption that each $\mH\in \mF$ \emph{covers pairs} (see \cite{keevash2011hypergraph}). Again, for such $\mF$, we can reinterpret $\pi(\mF)$ as the limiting density, as $n\to\infty$, of an $\mF$-free $r$-graph $G$ on $n$ vertices which maximizes $\delta_1(G)$. 
Viewed from this perspective, it is natural to study $\mF$-free $r$-graphs maximizing other notions of minimum degree, and the minimum codegree is a natural candidate. 

Determining the codegree Tur\'an density of a family of hypergraphs $\mF$ is a challenging problem, and it
 has a very different flavor from the corresponding ordinary Tur\'an problem. 
 A conjecture of  Czygrinow and Nagle~\cite{czygrinow2001codegree} is that $\gamma(\K_4^3) = 1/2$.
Using the flag algebra method of Razborov~\cite{razborov2007flag}, Falgas-Ravry, Pikhurko, Vaughan, and Volec~\cite{falgasravry2023codegree} proved that $\gamma(\K_4^{3-}) = 1/4$.
See Table 1 in~\cite{balogh2022hypergraph} for more results and conjectures.

There are several kinds of hypergraph cycles: tight cycles, loose cycles, Berge cycles, etc.,  with many different extremal behaviors. The focus of this paper is tight cycles.
Given a length $\l > r \ge 2$, the \emph{tight $r$-uniform cycle} $\C$ of length $\l$ is the $r$-graph with vertex set $\{0, 1, \ldots, \l-1\}$ and edge set $\{\{i, i+1, \ldots, i+r-1\}: 0 \le i < \l\}$, where the addition is modulo $\l$. Note that $\mC_4^3$ is exactly $\K_4^3$. 

The Tur\'an problem for tight cycles has drawn considerable attention.
When $\l$ is a multiple of $r$, we have that $\C$ is $r$-partite, and a classical result of Erd\H{o}s~\cite{erdos1964extremal} gives $\pi(\C) = \gamma(\C) = 0$. For the case $r=3$, it is conjectured that $\pi(\mC_5^3) = 2\sqrt{3} -3$, see~\cite{falgasravry2012turan, mubayi2011hypergraph}. A recent breakthrough by Kam\v{c}ev, Letzter, and Pokrovskiy~\cite{kamcev2024turan} gives $\pi(\mC^3_\l) = 2\sqrt{3} -3$ for sufficiently large $\l$ not divisible by $3$. Using the flag algebra method of Razborov~\cite{razborov2007flag}, Bodn\'ar, Le\'on, Liu, and Pikhurko~\cite{bodnar2025turan} proved that $\pi(\{\K_4^3, \mC_5^3\}) = \pi (\mC_\l^3) = 2\sqrt{3}-3$ for every $\l \ge 7$ not divisible by $3$. 
Regarding codegree Tur\'an density, Piga, Sanhueza-Matamala, and Schacht~\cite{piga2026codegree} proved $\gamma(\mC^3_\l) = 1/3$ for every $\l \in \{10,13,16\}$ and every $\l \ge 19$ not divisible by $3$, and Ma~\cite{ma2024codegree} proved $\gamma(\mC^3_{\ell}) = 1/3$ when $\l \in \{11,14,17\}$. For $r=4$, Sankar~\cite{sankar2024turan} proved $\pi (\mC_\l^4) = 1/2$ for sufficiently large $\l$ not divisible by $4$; her work also implies $\gamma(\mC_\l^4)=1/2$ as an easy corollary. For other values of $r$ and $\l$, Han, Lo, and Sanhueza-Matamala~\cite{han2021covering} proved $\gamma(\C) = 1/2$ if $\l \ge 2r^2$ and $r / \gcd(r, \l)$ is even. The extremal construction (a special case of \cref{construction} below) is the complete oddly bipartite hypergraph --- this is an $r$-graph $\mH$ with vertex set equipartitioned as $V(\mH)=V_1\sqcup V_2$, whose edges are those $r$-sets containing an odd number of vertices from $V_1$ and, implicitly, an odd number of vertices from $V_2$, as $r$ is even.

We remark that some other variations of Tur\'an problems have also been studied for tight cycles, see for example~\cite{balogh2022hypergraph, bucic2023uniform, balogh2026positive}.
\medskip

As we mentioned earlier,  Han, Lo, and Sanhueza-Matamala~\cite{han2021covering} proved $\gamma(\C) = 1/2$ if $\l \ge 2r^2$ and $r / \gcd(r, \l)$ is even.
In this paper, we make progress on the case when $r / \gcd(r,\l)$ is an odd integer. Our main theorem is the following.
\begin{theorem} \label{thm::main}
    For every uniformity $r \ge 2$ and residue class $k \not\equiv 0 \pmod r$ such that $r / \gcd(r,k)$ is odd, there is an integer $L$ such that for every $\l > L$ with $\l \equiv k \pmod r$, we have
    \[
        \gamma (\C) \le 1/3.
    \]
\end{theorem}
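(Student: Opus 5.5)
Fix $\eps>0$ and an $n$-vertex $r$-graph $\mH$ with $\drmh \ge (1/3+\eps)n$, $n$ large. The plan is to show that $\mH$ contains a tight cycle of every length $\l\equiv k\pmod r$ with $\l>L$. The argument has three parts.

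\textbf{Reduction to a homomorphic image.} First I would apply hypergraph regularity, or a supersaturation/blow-up argument, to reduce the problem to finding a homomorphic image of a tight cycle. It suffices to find, inside a small dense substructure of $\mH$, a closed tight walk whose length lies in the residue class $k$ modulo $r$. Such a walk can be padded with copies of the cycle $\mathcal C^r_{r}$-type windings, which have length divisible by $r$ and exist in any dense tight component. Blowing it up then produces an actual $\C$ for every long $\l\equiv k$. The needed tools are standard (as in Han--Lo--Sanhueza-Matamala and Sankar):
\begin{itemize}
\item a dense ``tight component'' of the reduced graph in which any two ordered edges are joined by tight walks;
\item a lemma converting a closed tight walk of length $m$ into tight cycles of all large lengths $\equiv m \pmod r$.
\end{itemize}
So $L$ depends only on $r$, $k$ and $\eps$.

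\textbf{Group-theoretic reformulation.} Within a tight component, consider ordered edges $\vec e=(v_1,\dots,v_r)$. One step of a tight walk shifts the window by one vertex, and walking around a closed walk of length $m$ returns the ordered edge up to a permutation, namely a rotation by $m$. The set of achievable ``twists'' forms a subgroup $\Gamma$ of $S_r$, or of an associated cyclic group tracking $m \bmod r$ together with the induced permutation. The goal is to show that the residue class $k$ is achievable.

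If it is not, then we obtain an ``oriented coloring'': a labelling of the ordered $(r-1)$-sets, or of the edges, by cosets of a proper subgroup, consistent along tight walks. The condition that $r/\gcd(r,k)$ is odd enters here. The obstructing subgroup must then have index at least $3$ in the relevant quotient. By contrast, index $2$ corresponds exactly to the oddly bipartite obstruction, which requires $r/\gcd(r,k)$ even.

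\textbf{Degree counting (main obstacle).} The key step, and the one I expect to be hardest, is to show that such a coloring with at least $3$ classes forces some $(r-1)$-set to have codegree at most $(1/3+o(1))n$. The first approach I would try is this. Fix an $(r-1)$-set $S$ with a given color pattern. Each neighbor $v$ of $S$ determines the color of the edge $S\cup\{v\}$, and consistency around small closed walks (tight-walk analogues of triangles) shows that a vertex $v$ can be a common extension for only one of three suitably related $(r-1)$-sets. Summing the three codegrees of these sets then gives at most $n+o(n)$, so one of them is at most $n/3$, contradicting $\drmh\ge(1/3+\eps)n$.

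Making the choice of the three related $(r-1)$-sets precise, via the cyclic structure of $\Gamma$ and the odd order of $r/\gcd(r,k)$, is where the real work lies.
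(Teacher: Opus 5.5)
\textbf{The third part of your proposal has a genuine gap.} Your reduction and coloring reformulation match the paper's route in spirit: it uses $\gamma(\mF)=\gamma(\mF\texthom)$ for finite $\mF$, Sankar's shortening result at codegree $\ge n/r$, and \cref{coloringLemma}. The regularity variant is riskier than you suggest, though. A reduced graph does not inherit a minimum codegree condition on \emph{every} $(r-1)$-set, and your counting step uses exactly that.

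The real gap is the degree-counting part, which is the whole content of the theorem and which you leave open. The heuristic you offer for it points the wrong way. Whether the obstructing group has index at least $3$ does not decide the $1/3$ threshold. Take $(r,k)=(4,2)$: here $r/\gcd(r,k)=2$ is even and $\gamma(\Ckrh)=1/2$, via the oddly bipartite construction. Yet the only available color is $S_1\times S_3$, of index $4$. The only index-$2$ subgroup, $A_4$, contains $\cyc^2=(13)(24)$, so it is not available. So an index-$\ge 3$ obstruction does not force codegree $\le n/3$, and the parity of $r/\gcd(r,k)$ does not enter where you place it.

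The local mechanism you sketch has the same limitation. Consistency gives three $(r-1)$-subsets of one edge with pairwise disjoint neighborhoods only when the color $\Ga$ has at least three transposition classes. This is \cref{onlySi}, together with the fact that any $\Ga\supsetneq S_I\times S_J$ contains an $r$-cycle. For a color $S_i\times S_{r-i}$, consistency at an edge only separates replacements in $h_i^+$ from replacements in $h_i^-$. That gives two disjoint neighborhoods and hence only the bound $1/2$ (\cref{le1over2}).

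The missing idea is global:
\begin{enumerate}
\item From an $i$-good $(r-2)$-set $U$, build a bipartition $X^+(U)\sqcup X^-(U)$ of all of $V(\mH)$. Codegree $>n/3$ rules out a third class.
\item By vertex exchanges, show that every $(i,r-i-1)$-set and every $(i-1,r-i)$-set is colored according to this bipartition.
\item Deduce that there are no $(i\pm1,r-i\mp1)$-edges. Codegree then forces $(i\pm2,r-i\mp2)$-edges, and so the colors $S_{i\pm2}\times S_{r-i\mp2}$ are used (\cref{manyToColoring}).
\item Rule out $S_{r-2}\times S_2$ by applying the $1/2$ bound inside $X^+(U)$, which has size at most $(2/3-\eps)n$. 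The product $\tfrac12\cdot\tfrac23$ is where $1/3$ comes from.
\item Parity now forces $r$ to be even with $S_i\times S_{r-i}$ available for every odd $i$. This is impossible when $r/m$ is odd, where $m=\gcd(r,k)$, because $S_{r/m}\times S_{r-r/m}$ then contains a conjugate of $\cyc^k$.
\end{enumerate}
Without an argument of this kind, your proposal does not prove the statement.
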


This bound is tight for infinitely many pairs $(r,k)$, as the following construction shows.

\begin{construction} [See Construction 10.1 in~\cite{han2021covering}] \label{construction}
    Fix a uniformity $r$ and an integer $p$. Partition the vertex set $V$ into $V_1,\ldots, V_p$  such that their sizes differ by at most one. An $r$-set of vertices $\{x_{1}, x_{2}, \ldots, x_{r}\}$, where $x_j \in V_{i_j}$,  is an edge if and only if $\sum_{j=1}^r i_j \equiv 1 \pmod p$.
\end{construction}

Clearly, the hypergraph $\mH$ in \cref{construction} has minimum codegree $(1/p - o(1)) n$. 
Additionally, we show (see \cref{lem::conCkrhfree}) that this hypergraph is $\C$-free for every $\ell$ with $\gcd(r,\ell)\mid r/p$. Thus, by \cref{thm::main}, it follows that if $\ell$ is sufficiently large and the smallest prime divisor of $r / \gcd (r,\l)$ is $3$, then we have $\gamma (\C) = 1/3$. For example, for $r = 6$, the result of Han, Lo, and Sanhueza-Matamala~\cite{han2021covering} gives $\gamma (\mC^6_\l) = 1/2$ for every $\l \ge 72$ congruent to $1$, $3$, or $5 \pmod 6$. We have now that $\gamma (\mC^6_\l) = 1/3$ for every sufficiently large $\l$ congruent to $2$ or $4 \pmod 6$. 
Also, for $r = 3^m$, we have $\gamma(\C)= 1/3$ for every sufficiently large $\l$ not divisible by $r$.
These results reflect an interesting phenomenon that $\gamma(\C)$ can be very sensitive to the value of $\l$ and/or the factorization of $r$.

Our proof for \cref{thm::main} first considers the case where an infinite family containing $\C$ is forbidden. This approach was also used in~\cite{kamcev2024turan, balogh2024turan, sankar2024turan}. 
Given two $r$-graphs $\mH_1,\mH_2$, a \emph{homomorphism} from $\mH_1$ to $\mH_2$ is a map $f$ from $V(\mH_1)$ to $V(\mH_2)$ which maps $r$-edges to $r$-edges. 
We call $\mH_2$ a \emph{homomorphic image} of $\mH_1$ if $f$ induces a surjection from $E(\mH_1)$ to $E(\mH_2)$.
For a family $\mF$ of $r$-graphs, let $\mF\texthom$ denote the family of all homomorphic images of $r$-graphs in $\mF$.
Let $\Ckr$ be the (infinite) family of $r$-uniform tight cycles with length congruent to $k$ modulo $r$.
In~\cite{sankar2024turan}, Sankar gave a description of $\Ckr$-hom-free $r$-graphs using the language of group theory, generalizing the well-known description of $\mC_{\equiv 1}^2$-hom-free graphs as bipartite. Roughly speaking, 
an $r$-graph is $\Ckr$-hom-free if and only if an ``oriented coloring'' can be given to the $(r-1)$-tuples of vertices such that the colorings of the $(r-1)$-subtuples of any $r$-edge satisfy a certain ``consistency'' requirement. We discuss this in detail in \cref{sec::ckrfree}. Using this description of $\Ckr$-hom-free $r$-graphs, we prove that $\gamma (\Ckrh) \le 1/3$ if $r/\gcd(r,k)$ is odd, see \cref{pro::main}.
Finally, a simple and standard observation 
(based on deep theorems in extremal combinatorics) shows that $\gamma(\C) = \gamma(\Ckrh)$ for sufficiently large $\l \equiv k \pmod r$, see \cref{lem::codEq}.

The rest of this paper is organized as follows. 
In \cref{sec::conFree}, we show that the hypergraph in \cref{construction} is $\C$-free if $p$ divides $r/\gcd(r,\ell)$. 
In \cref{sec::ckrfree}, we explain the description of $\Ckrh$-free $r$-graphs. 
In \cref{sec::proof}, we give the proof of \cref{thm::main}, and in \cref{sec::conRem}, we have some concluding remarks.

\paragraph{Notation.} For a positive integer $n$, we write $[n]$ for the set $\{1,\ldots,n\}$.
For a set $S$ and an element $v$, we write $S + v$ for the set $S \cup \{v\}$ and $S - v$ for the set $S \sm \{v\}$.
For a set $S$ and a positive integer $i$, 
an $i$-set of $S$ is a subset of $S$ of size $i$, and we
let $\b{S}{i}$ be the family of  $i$-sets of $S$.
For a hypergraph $\mH = (V(\mH), E(\mH))$, we often use $\mH$ for its edge set, in particular, $|\mH|$ for  $|E(\mH)|$.
Given a hypergraph $\mH$ and sets of vertices $S,X$, let $N_X(S) \ce \{T \subseteq X \sm S : \ T \cup S \in E(\mH)\}$ be the \emph{link graph} of $S$ and $d_X(S) \ce |N_X(S)|$ be the \emph{degree} of $S$, respectively. When $X = V(\mH)$, we often  omit the subscript $X$ and just use $N(S)$ and $d(S)$.
For two disjoint sets $X,Y$, we say $T$ is an $(i,j)$-{\it set} of $(X,Y)$ if $T \subseteq X\cup Y$, $|T\cap X| = i$, and $|T \cap Y| = j$. If $X,Y$ are subsets of the vertex set of some $r$-graph $\mH$, then an $(i,j)$-{\it edge} is an $(i,j)$-set that is an edge of $\mH$.

\begin{remark}
    During the preparation of this manuscript, we learned that Ma and Rong~\cite{ma2025codegree}
    independently proved similar results, using a different  method. Moreover, they provided an additional family
    of constructions that yields a better bound than \cref{construction} for many pairs $(r,k)$.
\end{remark}

\section[Construction 2 is Clr-free]{\cref{construction} is $\C$-free} \label{sec::conFree}

In this section, we prove that the $r$-graph in \cref{construction} is $\C$-free for every $\ell$ with $\gcd(r,\ell)\mid r/p$.
In~\cite{han2021covering}, it was proved that if $p$ divides $r$ but does not divide $k$, then the $r$-graph in \cref{construction} is free of $\C$ for every $\l$ congruent to $k$ modulo $r$, see their Proposition 10.2. \cref{lem::conCkrhfree} says that it is sufficient to only require that $p$ divides $r / \gcd(r,\l)$.

\begin{lemma} \label{lem::conCkrhfree}
    Given a uniformity $r$ and some residue $k \not \equiv 0 \pmod r$, let $p$ be a divisor of $r / \gcd(r,k)$ and $\mH$ be the $r$-graph given by \cref{construction}. Then, $\mH$ contains no member of $\Ckrh$.
\end{lemma}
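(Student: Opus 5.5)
Since a member of $\Ckrh$ is a homomorphic image of some tight cycle $\C$ with $\l\equiv k \pmod r$, and any homomorphic copy of such an image inside $\mH$ gives a homomorphism $\C\to\mH$, it suffices to show that there is no homomorphism $f\colon \C\to\mH$ with $\l\equiv k\pmod r$. Composing $f$ with the part-index map $V(\mH)\to\mathbb{Z}/p\mathbb{Z}$, $x\mapsto i$ for $x\in V_i$, we get a labelling $a_0,\dots,a_{\l-1}\in\mathbb{Z}/p\mathbb{Z}$ of the cycle vertices (indices modulo $\l$). Every edge $\{j,\dots,j+r-1\}$ of $\C$ maps to an edge of $\mH$. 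Since $f$ is injective on each edge, the sum of the part indices of the image is exactly $a_j+\dots+a_{j+r-1}$, so this sum is $\equiv 1\pmod p$ for every $j$.

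Subtracting the conditions for consecutive $j$ gives $a_{j+r}=a_j$ in $\mathbb{Z}/p\mathbb{Z}$ for all $j$. Together with the $\l$-periodicity of indices, the sequence is periodic with period $d\ce\gcd(r,\l)$. Note that $\gcd(r,\l)=\gcd(r,k)$, since $\l\equiv k\pmod r$. Hence any window of $r$ consecutive vertices consists of $r/d$ full periods, so
\[
1\equiv a_0+\dots+a_{r-1} = \frac{r}{d}\,(a_0+\dots+a_{d-1}) \pmod p.
\]
Since $p\mid r/\gcd(r,k)=r/d$, the right-hand side is $\equiv 0\pmod p$. This is a contradiction, because $1\not\equiv 0\pmod p$. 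Here we assume $p\ge 2$; if $p=1$, then $\mH$ is complete and the statement presumably excludes this trivial case.

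The only points needing care are the reduction from homomorphic images to homomorphisms, and the observation that the sum is well defined. The latter holds because $f$ is injective on edges, so the image $r$-set has its $r$ part indices counted with multiplicity exactly as the labels $a_j,\dots,a_{j+r-1}$. I expect neither to be a real obstacle. The key step is the periodicity argument via $\gcd(r,\l)$.
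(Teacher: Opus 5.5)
Your proof is correct and matches the paper's argument: you get $r$-periodicity of the part labels from consecutive edges (the paper phrases this as every $(r-1)$-set having its neighborhood inside a single part, which is equivalent to your subtraction of the two sums), upgrade it to $\gcd(r,k)$-periodicity using $\l\equiv k \pmod r$, and reach the contradiction $1\equiv \frac{r}{\gcd(r,k)}(\cdots)\equiv 0 \pmod p$. Your caveat that $p\ge 2$ is needed is also right; the paper uses this tacitly, since for $p=1$ the construction is the complete $r$-graph.
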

\begin{proof}
     Suppose for contradiction that $\mH$ contains a copy of $\C$-hom for some $\l \equiv k \pmod r$. Let $x_1,\ldots,x_\l$ be the (not necessarily distinct) vertices along such a copy, and for $1\leq j\leq \ell$, let $V_{i_j}$ be the part containing $x_j$. Hereafter, all subscripts containing $j$ are taken modulo $\ell$.
     
    We first prove that $i_j = i_{j + \gcd(r,k)}$ for every $j$.
    It suffices to prove that $i_j = i_{j+r}$ and $i_j= i_{j+k}$ for every $j$. By construction, for every $(r-1)$-set of vertices, its neighborhood is contained in a single set $V_i$. Due to edges $\{x_j,x_{j+1},\ldots, x_{j+r-1}\}$ and $\{x_{j+1},\ldots, x_{j+r-1},x_{j+r}\}$, we have $i_j = i_{j+r}$. Also, we trivially have $i_j = i_{j+\l}$. Because $\l \equiv k \pmod r$ and $i_j = i_{j+r}$, we then have $i_j = i_{j+k}$.
    
    Since $i_j=i_{j+\gcd(r,k)}$ for each $j$, we have that
    \[
    i_1+i_2+\ldots+i_r=\frac r{\gcd(r,k)}\left(i_1+\ldots+i_{\gcd(r,k)}\right).
    \]
    Because $p$ divides $r/\gcd(r,k)$, this sum is 0 modulo $p$. However, $\{x_1,\ldots,x_r\}\in E(\mH)$, so this sum is 1 modulo $p$ by the construction, yielding a contradiction.
\end{proof}

\section[Ckr-hom-free hypergraphs]{$\Ckr$-hom-free hypergraphs} \label{sec::ckrfree}

As mentioned in \cref{sec::Int}, our proof for \cref{thm::main} relies on the analysis of $\Ckr$-hom-free hypergraphs. 
In this section, we describe the $\Ckr$-hom-free $r$-graphs. We first give a brief introduction to the entire theory and then focus on the parts we need for the proof of \cref{thm::main}, see \cref{onlySi}.

\subsection{Permutations and oriented colorings}

For every uniformity $r\geq 2$ and each residue $k$ modulo $r$, Sankar~\cite{sankar2024turan} gave an equivalent description of $\Ckr$-hom-free $r$-graphs as those admitting a certain type of ``oriented coloring''. To state the characterization, we need some group theory.
 
An \emph{oriented edge} of an $r$-graph $\mH$ is an ordered $r$-tuple $\vx=x_1\dots x_r$ whose support $\{x_1,\ldots,x_r\}$ is an edge of $\mH$. Let $\EdH$ be the set of all oriented edges of $\mH$. There is a natural action of the symmetric group $S_r$ on $\EdH$ given by
\[
    \pi(x_1\ldots  x_r) = x_{\pi^{-1}(1)}\ldots    x_{\pi^{-1}(r)}
\]
for each permutation $\pi\in S_r$ and oriented edge $\vx=x_1\dots x_r\in\EdH$. 

An \emph{$S_r$-set} is a set equipped with a group action by $S_r$. One example is the set $\EdH$ of oriented edges of an $r$-graph $\mathcal{H}$ mentioned above. Another example is, for a subgroup $\Gamma\subseteq S_r$, the family $S_r/\Ga$ of left cosets $\sg\Ga:=\{\sg\ga:\ga\in\Ga\}$, where the action of $\pi\in S_r$ maps a coset $\sg\Ga\in S_r/\Ga$ to $(\pi\sg)\Ga$. If $A$ and $B$ are $S_r$-sets, a map $\chi:A\to B$ is called \emph{$S_r$-equivariant} if it preserves the structure of the $S_r$ action, i.e., if $\pi(\chi(a))=\chi(\pi(a))$ for each $a\in A$ and $\pi\in S_r$. With this in mind, we may define oriented colorings.

\begin{definition}
Let $\mH$ be an $r$-graph. An \emph{oriented coloring} of $\mH$ by an $S_r$-set $A$, also called an \emph{$A$-coloring}, is an $S_r$-equivariant map $\chi:\EdH\to A$. Each orbit of $A$ is called a \emph{color}, and the color of an edge $e\in E(\mH)$ is the orbit containing $\chi(\vec x)$ for each orientation $\vec x$ of $e$.
\end{definition}

To see why this might be considered a coloring, l  et $r=3$ and consider the set $A=\left\{\pointedtriangle,\pointedtriangleleft,\pointedtriangleright\right\}$ of pictograms. Then $S_3$ acts on $A$ by permuting the three vertices of the triangle. If $\chi$ is an $A$-coloring with $\chi(x_1x_2x_3)=\pointedtriangle$ for some oriented edge $\vx=x_1x_2x_3$ then
\[\begin{array}{c}
\chi(x_2x_1x_3)=(12)\chi(\vx)=\pointedtriangleleft,\quad
\chi(x_3x_2x_1)=(13)\chi(\vx)=\pointedtriangleright,\quad
\chi(x_1x_3x_2)=(23)\chi(\vx)=\pointedtriangle,\quad
\\[\medskipamount]\chi(x_3x_1x_2)=(123)\chi(\vx)=\pointedtriangleleft,\quad\text{and}\quad
\chi(x_2x_3x_1)=(132)\chi(\vx)=\pointedtriangleright.\vspace{-3pt}
\end{array}\]
One may reinterpret this as the edge $\{x_1,x_2,x_3\}$ being colored by $A$ in the orientation\pointedtrianglelabeled{x_1}{x_2}{x_3}. In fact, every $S_r$-set is isomorphic to a set of rotations/reflections of coloring(s) of the $(r-1)$-simplex. A more complicated example is the $S_3$-set $A'=\left\{\pointedtriangle,\pointedtriangleleft,\pointedtriangleright,\pinwheel,\pinwheelrev\right\}$, which contains two colors corresponding to the pictograms $\pointedtriangle$ and $\pinwheel$ (up to rotation/reflection).

\begin{definition}
Let $\chi:\EdH\to A$ be an oriented coloring of an $r$-graph $\mH$. We say that $\chi$ is \emph{accordant} if, for any two oriented edges $\vx=x_1\dots x_r$ and $\vx'=x_1\dots x_{i-1}x'_ix_{i+1}\dots x_r$ in $\EdH$ differing by exactly one (arbitrary) coordinate, it holds that $\chi(\vx)=\chi(\vx')$.	
\end{definition}

Pictorially, a coloring is accordant if, for any set $e'=\{x_1,\ldots,x_{r-1}\}$, all edges of the form $e'\cup\{y\}$ receive the same coloring in the same orientation. Using the $3$-uniform example $A'$ above, in an accordant $A'$-coloring, two edges $\{x_1,x_2,y\}$ and $\{x_1,x_2,y'\}$ overlapping on two vertices must be colored as either
\newcommand{\makepointeddiamond}[3]{
\!\begin{tikzpicture}[scale=0.8, every node/.append style={scale=0.8}, baseline=0.78em]
\path (30:1) coordinate (y2) -- (0,0) coordinate (x2) -- (0,1) coordinate (x1) -- (150:1) coordinate (y1);
\foreach \i in {1,2}
	\tikzshiftedpointedtriangle{#1}{#2}{#3}{(0,0)}{0.18}
\path (x1) --++(0,0.05) node {$x_1$};
\path (x2) --++(0,-0.05) node {$x_2$};
\path (y1) --++(-0.1,0) node {$y$};
\path (y2) --++(0.1,0) node {$y'$};
\end{tikzpicture}\!}
\newcommand{\makepindiamond}[3]{
\!\begin{tikzpicture}[scale=0.8, every node/.append style={scale=0.8}, baseline=0.78em]
\path (30:1) coordinate (y2) -- (0,0) coordinate (x2) -- (0,1) coordinate (x1) -- (150:1) coordinate (y1);
\foreach \i in {1,2}
	\tikzshiftedpinwheel{#1}{#2}{#3}{(0,0)}{0.18}
\path (x1) --++(0,0.05) node {$x_1$};
\path (x2) --++(0,-0.05) node {$x_2$};
\path (y1) --++(-0.1,0) node {$y$};
\path (y2) --++(0.1,0) node {$y'$};
\end{tikzpicture}\!}
\makepointeddiamond{x1}{x2}{y\i},
\makepointeddiamond{x2}{x1}{y\i},
\makepointeddiamond{y\i}{x1}{x2},
\makepindiamond{x1}{x2}{y\i},
or\makepindiamond{x2}{x1}{y\i}.
Accordance is harder to visualize when $r\geq 4$, but admits a simple combinatorial description for particularly nice $S_r$-sets, see~\cite{sankar2024turan,sankar2025topological}.

Let us now state \cref{coloringLemma}, which uses accordant colorings to characterize $\Ckr$-hom-free $r$-graphs. To streamline the 
statement of the theorem, we need one more piece of terminology.
For a permutation $\pi\in S_r$ and a subgroup $\Ga\subseteq S_r$, we say $\Ga$ is \emph{$\pi$-conjugate avoiding} if $\Ga$ contains no conjugate $\sg\pi\sg^{-1}$ of $\pi$. Let $\fG_\pi$ be the set of all maximal $\pi$-conjugate avoiding subgroups of $S_r$.
Note that $\fG_\pi$ can be partitioned as $\bigsqcup_{j=1}^m [\Ga_j]$, where $\Ga_1,\ldots, \Ga_m$ are subgroups of $S_r$ and $[\Ga_j]$ denotes the family of all conjugates of $\Ga_j$ in $S_r$.
Define $A_\pi \ce \bigsqcup_{j=1}^m (S_r / \Ga_j)$, recalling that $S_r/\Ga_j$ is the $S_r$-set of left cosets of $\Ga_j$ acted on by left multiplication by $S_r$. In this setting, we find it notationally simpler to refer to colors $S_r/\Ga_i$ by their associate groups $\Ga_i$.

\begin{theorem}[Theorem 3.4 in~\cite{sankar2024turan}] \label{coloringLemma}
 Fix a uniformity $r \ge 2$ and a residue $k$ modulo $r$. Let $\cyc=(12\dots r)\in S_r$ denote the cyclic shift permutation. We have that an $r$-graph $\mH$ is $\Ckr$-hom-free if and only if there is an accordant $A_{\cyc^k}$-coloring $\chi: \EdH \to A_{\cyc^k}$.
\end{theorem}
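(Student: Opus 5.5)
The plan is to realise both directions through a single combinatorial object: the ``replacement components'' of $\EdH$. Declare two oriented edges \emph{adjacent} if they differ in exactly one coordinate, and let $\mathcal{P}$ be the set of connected components of the resulting graph on $\EdH$. Since changing one coordinate commutes with permuting coordinates, $S_r$ acts on $\mathcal{P}$ by $\pi\cdot P\ce \pi(P)$. The quotient map $q:\EdH\to\mathcal{P}$ is then $S_r$-equivariant, and it is accordant by construction. For a component $P$, let $\Ga_P\ce\{\pi\in S_r:\pi(P)=P\}$ be its stabiliser. The orbit of $P$ is then isomorphic to $S_r/\Ga_P$, and conjugating $\Ga_P$ corresponds to moving to another component in the same orbit. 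Moreover, every accordant coloring factors through $q$: indeed $\chi$ is constant on each component by accordance.

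\textbf{Step 1 (dictionary between tight walks and stabilisers).} A homomorphic copy of $\C$ is a cyclic sequence $x_1,\dots,x_\l$ with every window $\vx_i=x_i\dots x_{i+r-1}$ an oriented edge. Passing from $\vx_i$ to $\vx_{i+1}$ amounts to applying a fixed rotation $c$ (a power of $\cyc^{\pm1}$, depending on the convention) and then replacing one coordinate. Hence $q(\vx_{i+1})=c\,q(\vx_i)$. Since $\vx_{\l+1}=\vx_1$ and $c^r=1$, we get $c^{\l}=c^{k}\in\Ga_{q(\vx_1)}$, so the stabiliser contains a power $\cyc^{\pm k}$. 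Conversely, suppose some $\Ga_P$ contains a conjugate $\sg\cyc^k\sg^{-1}$. After replacing $P$ by $\sg^{-1}P$, we may assume $\cyc^k\in\Ga_P$. Then there is an oriented edge $\vx\in P$ and a chain of single-coordinate replacements from $\vx$ to $c^{k}\vx$.

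I then turn this chain into a closed tight walk. A single replacement in coordinate $i$ is simulated by a tight walk of exactly $r$ steps. At each step we re-append the vertex just dropped, except at the step where coordinate $i$ is dropped, where we append the new vertex instead. Each intermediate window is a reordering of either the old or the new edge, so it is an oriented edge. These segments have length divisible by $r$ and net rotation trivial. Concatenating them, and finishing with $k$ ordinary shift steps, which realise $c^k$, yields a closed walk of length $\equiv k\pmod r$. Extra loops of length $r$ along a fixed edge can be inserted so that the length exceeds any prescribed bound; in particular it exceeds $r$. The result is a member of $\Ckrh$ in $\mH$.

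\textbf{Step 2 (conclusion).} Suppose $\mH$ is $\Ckrh$-free. By Step 1 no $\Ga_P$ contains a conjugate of $\cyc^k$; note that $\cyc^{-k}$ is conjugate to $\cyc^k$ in $S_r$, so the sign ambiguity does not matter. Hence each $\Ga_P$ is contained in some maximal avoiding subgroup, which is conjugate to some $\Ga_j$, say $\tau\Ga_j\tau^{-1}$. The map $\pi\Ga_P\mapsto\pi\tau\Ga_j$ from $S_r/\Ga_P$ to $S_r/\Ga_j$ is well defined and equivariant. Choose one representative per orbit of $\mathcal{P}$ and compose with $q$; this gives an accordant $A_{\cyc^k}$-coloring. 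Conversely, let $\chi$ be an accordant $A_{\cyc^k}$-coloring. The argument of Step 1 gives $c^k\chi(\vx_1)=\chi(\vx_1)$. But the stabiliser of the coset $\sg\Ga_j$ is $\sg\Ga_j\sg^{-1}$, which avoids all conjugates of $\cyc^{\pm k}$. This is a contradiction, so $\mH$ is $\Ckrh$-free.

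\textbf{Main obstacle.} I expect the delicate point to be the simulation in Step 1. One must check carefully that every window of the $r$-step gadget is genuinely an oriented edge of $\mH$, and that the net rotation bookkeeping is consistent: the rotation convention of the $S_r$-action $\pi(x_1\dots x_r)=x_{\pi^{-1}(1)}\dots x_{\pi^{-1}(r)}$ must match the shift $c$, so that the total walk length comes out exactly $\equiv k$ rather than $\equiv -k$. I would fix the convention once, verify the gadget for $r=3$ by hand, and then write the general step.
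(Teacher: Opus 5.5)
The paper does not prove this statement. It is quoted from Sankar's Theorem 3.4, so there is no in-paper argument to compare against. Your proposal is a correct, self-contained proof.

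\begin{itemize}
\item The components of the single-coordinate-replacement graph give a universal accordant coloring. Each $\pi\in S_r$ sends a pair differing in coordinate $i$ to a pair differing in coordinate $\pi(i)$, so $q$ is equivariant.
\item Orbit--stabiliser identifies each orbit of components with $S_r/\Gamma_P$.
\item The forward walk computation shows that a homomorphic $\mathcal{C}^r_\ell$ puts $c^\ell$ into a stabiliser.
\item Your $r$-step gadget is valid: every intermediate window has support $e$ or $e'$.
\item The map $\pi\Gamma_P\mapsto\pi\tau\Gamma_j$ is well defined because $\Gamma_P\subseteq\tau\Gamma_j\tau^{-1}$. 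The stabiliser of $\sigma\Gamma_j$ is $\sigma\Gamma_j\sigma^{-1}$, so both directions of Step 2 go through.
\end{itemize}

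The one thing to fix is the bookkeeping you flagged as the main obstacle. With the action $\pi(x_1\dots x_r)=x_{\pi^{-1}(1)}\dots x_{\pi^{-1}(r)}$, the step $x_1\dots x_r\mapsto x_2\dots x_r y$ is $\mathrm{cyc}^{-1}$ followed by replacing the last coordinate, so $c=\mathrm{cyc}^{-1}$.

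In the converse of Step 1, a chain from $\vec{x}$ to $c^k\vec{x}$ followed by $k$ shift steps ends at the window $c^{2k}\vec{x}$, which need not equal $\vec{x}$. So the walk does not close as literally written. Instead, take a chain from $\vec{x}$ to $c^{-k}\vec{x}$, which exists because $\Gamma_P$ is a group. Equivalently, run your chain backwards starting from the window $c^k\vec{x}$. Then the $k$ shifts return to the starting window, and the closed walk has length $mr+k$, where $m$ is the number of replacements. Padding by trivial $r$-loops makes the length exceed $r$.

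Closing with $r-k$ shifts instead would give length $\equiv -k$, which is the wrong residue class. So the direction matters for the cycle length, even though, as you note, $\mathrm{cyc}^{-k}$ and $\mathrm{cyc}^{k}$ are conjugate for the group-theoretic condition.
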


\begin{example}
When $(r,k)=(2,1)$, the $S_2$-set $A_{(12)}$ is isomorphic to the set
$\{
\tikz[scale=0.8, baseline=-0.1em]{
\draw[fill=black!70] (0,0) rectangle (-0.5,0.1);
\draw (0,0) rectangle (0.5,0.1);
},
\tikz[scale=0.8, baseline=-0.1em]{
\draw (0,0) rectangle (-0.5,0.1);
\draw[fill=black!70] (0,0) rectangle (0.5,0.1);
}
\}$ acted on by permuting the two endpoints. Then, \cref{coloringLemma} says that a graph $H$ is $\mC_{\equiv 1}^2$-hom-free if and only if one can color each edge of $H$ half white and half black such that either all or none of the  edges incident to a given vertex are black at that vertex. This corresponds to a black--white bipartition of the graph.
\end{example}

For a uniformity $r\geq 2$ and a residue $k$ modulo $r$, we say that the colors \emph{available} for $\Ckrh$ are the $\cyc^k$-conjugate avoiding groups $\Ga_j$ defined above.
Given an oriented coloring of an $r$-graph $\mH$ and an edge $x = \{x_1,\ldots,x_r\} \in \mH$, we say $x$ is colored with $\Ga$ if $\chi(x_1\ldots x_r) \in S_r / \Ga$; the color $\Ga$ is independent of the ordering of the edge.
For a $\Ckrh$-free $r$-graph $\mH$, when we say that $\mH$ is colored, we mean that we fix a map $\chi$ from $\EdH$ to $A_{\cyc^k}$, and we say a $\Ckrh$ available color $\Ga$ is \emph{used} in $\mH$ if at least one edge in $\mH$ is colored with $\Ga$.

\subsection [Accordant colorings by Si times Sr-i] {Accordant colorings by $S_i\times S_{r-i}$}

The coloring described in \cref{coloringLemma} is easier to visualize when the only groups $\Ga_j$ used are of the form $S_i\times S_{r-i}$ for some $1\leq i<r$. In this case, the $S_r$-set $S_r/(S_i\times S_{r-i})$ is isomorphic to the set of rotations or reflections of an $(r-1)$-simplex colored so that some set of  $i$ vertices is interchangeable and the remaining set of $r-i$ vertices is also interchangeable. Examples include\pointedtriangle\ for $S_3/(S_2\times S_1)$, \pointedtetrahedron\  for $S_4/(S_3\times S_1)$ and\edgetetrahedron\ for $S_4/(S_2\times S_2)$.

Consider the $S_r$-set $A=\bigsqcup_{i=1}^{r}\big(S_r/(S_i\times S_{r-i})\big)$ comprising these colors, and fix an $r$-graph $\mH$ equipped with an accordant $A$-coloring $\chi:\EdH\to A$.
Given an $r$-edge $e=\{x_1,\ldots,x_r\}$ colored by $S_i\times S_{r-i}$, define the partition $e=\hp i(e)\sqcup\hm i(e)$ so that $\hp i(e)$ is the set of $i$ interchangeable vertices and $\hm i(e)$ is the set  of $r-i$ interchangeable vertices. In particular, if $\vx=x_1\dots x_r$ and $\vy$ is obtained from $\vx$ by permuting vertices within each part $\hpm i(e)$, then $\chi(\vx)=\chi(\vy)$.
Additionally, if edge $e'=\{x_1,\ldots,x_{r-1},x'_r\}$ intersects $e$ in $r-1$ vertices, then accordance implies that
\begin{equation}
\label{eq:accordance-h-pm}
\hp i(e')=\begin{cases}
\hp i(e)&\quad  \text{if }\ x_r\notin\hp i(e),
\\\hp i(e)\cup\{x'_r\}-\{x_r\}&\quad \text{if }\ x_r\in\hp i(e),
\end{cases}
\end{equation}
and the same for $\hm i$. Note that being colored with $S_i \times S_{r-i}$ is the same as being colored with $S_{r-i} \times S_i$, while $h_i^\pm = h_{r-i}^\mp$.

Technically, if $i=r/2$, then the choice of $\hp i(e)$ and $\hm i(e)$ is not uniquely determined, as both parts have the same size. Nevertheless, accordance still lets us choose partitions $\hpm i(e)$ satisfying (\ref{eq:accordance-h-pm}) for any two edges $e,e'$ intersecting in $r-1$ vertices. For example, if $r=4$ and $i=2$, then $\hp 2$ could always correspond to the two vertices on the yellow edge of\edgetetrahedron\ and $\hm 2$ to the red edge.

We use similar notation for any $(r-1)$-set $W$: fix an arbitrary edge $e$ containing $W$, and say the color of $W$ is the color of $e$ and define $h_i^+(W) = h_i^+(e) \cap W$ and $h_i^- (W) = h_i^-(e) \cap W$. By the definition of being accordant, $h_i^+(W)$ and $h_i^-(W)$ are independent of the edge $e$, see \cref{fig::siedges}.

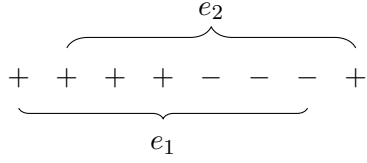
\begin{figure}[ht]
\centering
\begin{tikzpicture} [scale = 0.8]
  \node at (0.0,0) {$+$};
  \node at (0.8,0) {$+$};
  \node at (1.6,0) {$+$};
  \node at (2.4,0) {$+$};
  \node at (3.2,0) {$-$};
  \node at (4.0,0) {$-$};
  \node at (4.8,0) {$-$};
  \node at (5.6,0) {$+$};

  \draw[decorate,decoration={brace,mirror,amplitude=4pt}]
    (0.0,-0.5) -- (4.8,-0.5)
    node[pos=0.5,below=7pt] {$e_1$};

  \draw[decorate,decoration={brace,amplitude=8pt}]
    (0.8,0.5) -- (5.6,0.5)
    node[pos=0.5,above=7pt] {$e_2$};
\end{tikzpicture}
\caption{Two edges $e_1,e_2$ colored with $S_4\times S_3$ in a $7$-graph.  The vertices $+$ and $-$ are the vertices in $h_i^+$ and $h_i^-$, respectively. The shared $6$-tuple is of the same coloring in these two edges. Note that the signs $\pm$ represent colorings within $e_1$ and $e_2$; if there were an edge $e_3$ containing the rightmost five vertices and two extra vertices, then $e_3$ is not necessarily even colored with $S_4 \times S_3$.}
\label{fig::siedges}
\end{figure}

\subsection[Restricting to colors of the form Si times Sr-i] {Restricting to colors of the form $S_i\times S_{r-i}$}

The previous subsection shows that the accordant oriented coloring described in \cref{coloringLemma} has a relatively straightforward combinatorial description if the only groups $\Ga_j$ used are of the form $S_i\times S_{r-i}$. The first key step in our proof of \cref{thm::main} is to show that these are the only groups used when coloring $\Ckrh$-free $r$-graphs $\mH$ of large minimum codegree.

\begin{lemma} \label{onlySi}
Let $\mH$ be an $n$-vertex $\Ckrh$-free $r$-graph with $\delta_{r-1}(\mH)>n/3$ and let $\chi:\EdH\to A_{\cyc^k}$ be the accordant oriented coloring guaranteed by \cref{coloringLemma}. If $\Ga$ is used in $\mH$, then $\Ga$ is a conjugate of $S_i\times S_{r-i}$ for some $1\leq i<r$.
\end{lemma}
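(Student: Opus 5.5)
The plan is to show that $\Ga$, after conjugation, contains many transpositions, and then use maximality among $\cyc^k$-conjugate avoiding subgroups to pin it down. Fix an edge $e=\{x_1,\dots,x_r\}$ colored with $\Ga$, its orientation $\vx=x_1\dots x_r$, and write $\chi(\vx)=\sg\Ga$. Set $\Ga'\ce\sg\Ga\sg^{-1}$, which is the stabilizer of $\chi(\vx)$ in $S_r$. The claim to aim for is:
\[
\text{for any three distinct positions } a,b,c\in[r], \text{ one of } (ab),(bc),(ac) \text{ lies in } \Ga'.
\]

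\emph{Step 1 (codegree and pigeonhole).} Consider the three $(r-1)$-sets $e-x_a$, $e-x_b$, $e-x_c$. Each has more than $n/3$ neighbours, so two of these neighbourhoods share a vertex $y$, say those of $e-x_a$ and $e-x_b$. Note $y\notin e$, since $y$ completes $e-x_a$ to an edge.

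\emph{Step 2 (accordance).} Let $\vx^a$ be $\vx$ with $x_a$ replaced by $y$, and $\vx^b$ be $\vx$ with $x_b$ replaced by $y$. Both are oriented edges differing from $\vx$ in one coordinate, so accordance gives $\chi(\vx^a)=\chi(\vx^b)=\chi(\vx)$.

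Now apply the transposition $(ab)$ to $\vx^b$. This puts $y$ in position $a$ and $x_a$ in position $b$. The result differs from $\vx^a$ (which has $y$ in position $a$ and $x_b$ in position $b$) only in coordinate $b$. Accordance and equivariance then give
\[
(ab)\chi(\vx)=(ab)\chi(\vx^b)=\chi\big((ab)\vx^b\big)=\chi(\vx^a)=\chi(\vx).
\]
Hence $(ab)$ stabilizes $\sg\Ga$, that is, $(ab)\in\Ga'$.

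\emph{Step 3 (structure of $\Ga'$).} Let $G$ be the graph on $[r]$ whose edges are the transpositions lying in $\Ga'$. By Step 2, $G$ has no independent set of size $3$, so $G$ has at most two connected components. Transpositions along a connected component generate the full symmetric group on that component. Therefore $\Ga'$ contains either $S_r$ or a Young subgroup $S_I\times S_{[r]\sm I}$ with $|I|=i$, $1\le i<r$. The first option is impossible: $\Ga'$ is $\cyc^k$-conjugate avoiding, while $S_r$ contains $\cyc^k$ itself. (If $k\equiv 0\pmod r$, then $\cyc^k$ is the identity and no subgroup avoids it, so the statement is vacuous.) Hence $Y\ce S_I\times S_{[r]\sm I}\subseteq\Ga'$, and $Y$ is $\cyc^k$-conjugate avoiding because it is a subgroup of $\Ga'$.

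\emph{Step 4 (maximality; the main obstacle).} It remains to show that $\Ga'=Y$, which is where the maximality of $\Ga$ in $\fG_{\cyc^k}$ enters. The subgroups of $S_r$ containing a Young subgroup are easy to list.
\begin{itemize}
\item If $i\ne r-i$, the only overgroups of $Y$ are $Y$ and $S_r$, since $Y$ is an intransitive maximal subgroup. So $\Ga'=Y$.
\item If $i=r/2$, the overgroups are $Y$, the wreath product $W=S_{r/2}\wr S_2$, and $S_r$. I would rule out $\Ga'=W$ by exhibiting a conjugate of $\cyc^k$ inside $W$. Let $d=\gcd(r,k)$, so that $\cyc^k$ consists of $d$ cycles of length $m=r/d$.
\begin{itemize}
\item If $d$ is even, place $d/2$ of the cycles in each half. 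This conjugate already lies in $Y$, contradicting that $Y$ is avoiding, so this case cannot occur.
\item If $d$ is odd, then $m$ is even because $r$ is even. Choose each $m$-cycle to alternate between the two halves, using $m/2$ points of each half. The resulting element maps one half onto the other, so it lies in $W$.
\end{itemize}
Either way $W$ is not avoiding, so $\Ga'\ne W$ and again $\Ga'=Y$.
\end{itemize}

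Thus $\Ga=\sg^{-1}Y\sg$ is a conjugate of $S_i\times S_{r-i}$, as claimed. I expect the only delicate points to be this overgroup check in the balanced case $i=r/2$ and getting the orientation bookkeeping in the accordance step right.
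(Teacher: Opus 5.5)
Your proof is correct and follows the same route as the paper. You use the same codegree pigeonhole on $N(e-x_a),N(e-x_b),N(e-x_c)$ together with the accordance/equivariance computation to force at most two transposition classes, you exclude $S_r$, and you rule out the balanced case $|I|=|J|=r/2$ by finding a conjugate of $\mathrm{cyc}^k$ in $S_{r/2}\wr S_2$. The differences are cosmetic: you cite the overgroup classification of Young subgroups where the paper proves directly that any element of $\Gamma\setminus(S_I\times S_J)$ swaps $I$ and $J$, and you use a parity split on $\gcd(r,k)$ where the paper simply exhibits an $r$-cycle in $\Gamma$, whose $k$-th power is automatically conjugate to $\mathrm{cyc}^k$.
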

\begin{proof}
    Let $\Gatr$ be the subgroup of $\Ga$ generated by all transpositions in $\Ga$.
    We first prove that $\Gatr \cong S_i \times S_{r-i}$ for some $i \in [r-1]$. 
    
    Define the relation $\sim_\Ga$ on $[r]$ by $a \sim_\Ga b$ if $(ab) \in\Ga$; this is transitive: if $(ab), (bc) \in \Ga$, then $(ac) = (bc)(ab)(bc)$ is also in $\Ga$. Thus, this relation partitions $[r]$ into sets $I_1\cup\ldots \cup I_m$ and hence $\Ga$ contains $S_{|I_1|} \times \ldots \times S_{|I_m|}$. 
    Clearly $m\geq 2$, or else $\Ga=S_r$ would contain $\cyc^k$.
    
    Next, we use the minimum codegree assumption to show $m=2$. Suppose for contradiction that $m \ge 3$ and assume without loss of generality that $1\in I_1$, $2\in I_2$, and $3\in I_3$. Let $\vx = x_1\ldots x_r$ be an oriented edge in $\mH$ with $\chi(\vx)=\Ga\in S_r/\Ga$ (viewing $\Ga$ as a left coset of itself). For $j=1,2,3$, define $X_j=N(\{x_1,\ldots,x_r\}-\{x_j\})$; note that $x_j \in X_j$.
    We claim that $X_1,X_2,X_3$ are pairwise disjoint.
    Indeed, if e.g.~there was $v \in X_1 \cap X_2$, then
    \[
        (12)\chi(\vx) =
        (12)\chi(vx_2\ldots x_r) =
        \chi((12)(vx_2\ldots x_r)) =
        \chi(x_2v \ldots x_r) = 
        \chi(x_1v \ldots x_r) = 
        \chi (\vx),
    \]
    where the second-to-last equality uses accordance. This would yield $(12)\Ga=\Ga$ and hence $(12) \in \Ga$, contradicting the assumption that $1\not\sim_\Ga 2$. Thus, the sets $X_1,X_2,X_3$ are pairwise disjoint, yielding the contradiction
    \[
        n = |V(\mH)| \ge |X_1| + |X_2| + |X_3| \geq 3\cdot \del_{r-1}(\mH) > 3 \cdot n/3 = n.
    \]
    Thus, $m=2$. Setting $I=I_1$ and $J=I_2$, we may write $\Gatr=S_I\times S_J$.
    
    \begin{claim} \label{Simaximal}
    Either $\Ga=\Gatr$ or $\Ga$ contains an $r$-cycle.	
    \end{claim}
    
    \begin{proof}
    For the proof of this claim only, we will use the notation $[a,b]\ce (ab)$ to denote the permutation transposing $a$ and $b$. 
    
    Suppose $\Gatr$ is a proper subgroup of $\Ga$.
    We first prove that $\Ga$ contains a permutation mapping $I$ to $J$ bijectively, which in particular implies that $|I|=|J|=r/2$. Assume without loss of generality that $|I|\geq|J|$. Choose $\pi\in\Ga-(S_I\times S_J)$; it follows that $\pi$ maps some $a\in I$ to $b\ce\pi(a)\in J$. For any other $a'\in I$, observe that
    \[[\pi(a'),b]=[\pi(a'),\pi(a)]=\pi\circ[a,a']\circ\pi^{-1}\in\Ga.\]
    Thus, $\pi(a')\sim_\Ga b$ so $\pi(a')\in J$. It follows that $\pi$ maps $I$ injectively into $J$; moreover, this is a bijection because $|I|\geq|J|$.
    
    Let $i=r/2$ and assume without loss of generality that $I=\{1,\ldots,i\}$ and $J=\{i+1,\ldots,r\}$,
    and that $\Ga$ contains the permutation $\pi$ mapping $a$ to $a + i$ for every $a \in I$. Let $g$ be the cycle $(12\ldots i)$. Then, we have
    \[
        \pi g = (1(2+i)2(3+i)3 \ldots (2i)i(1+i)),
    \]
    so $\Ga$ contains an $r$-cycle.
    \end{proof}
    
    By \cref{Simaximal}, either $\Ga=\Gatr\cong S_{|I|}\times S_{|J|}$ or $\Ga$ contains an $r$-cycle $\pi'$. However, the latter case is impossible as $(\pi')^k$ would be a conjugate of $\cyc^k$.\qedhere
\end{proof}

\section[Proof of the main theorem]{Proof of the main theorem} \label{sec::proof}
In this section, we prove \cref{thm::main}. 
We first observe (\cref{lem::codEq}) that $\gamma (\C) = \pich$ for sufficiently large $\l$ congruent to $k$ modulo $r$. Then, we can focus on $\Ckrh$-free $r$-graphs and apply the coloring results for $\Ckrh$-free $r$-graphs discussed in \cref{sec::ckrfree}. 
\cref{thm::main} follows easily from \cref{pro::main}, a characterization of the pairs $(r,k)$ such that $\pich > 1/3$.

\cref{lem::codEq} connects the codegree Tur\'an densities of $\C$ and $\Ckrh$. 
We include the proof here for completeness, but we note that this was essentially proved in \cite{sankar2024turan} (see Propositions 2.3 and 4.1 and Corollary 4.2) and presented in a more general form in \cite{sankar2025topological} (see Theorem 7.3.3 and Lemma 7.5.2). For the case $r=3$, this was also essentially proved in \cite{kamcev2024turan} (see Theorem 6.2 and Propositions 6.3--6.4). 
\begin{lemma}\label{lem:supersaturation}
Let $\mF$ be any finite family of $r$-graphs. Then $\ga(\mF)=\ga(\mF\texthom)$.
\end{lemma}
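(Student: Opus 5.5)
The inequality $\ga(\mF\texthom)\le\ga(\mF)$ is immediate. Every member of $\mF$ lies in $\mF\texthom$ (the identity map is a surjective homomorphism), so an $\mF\texthom$-free $r$-graph is also $\mF$-free. Hence $\ex_{r-1}(n,\mF\texthom)\le \ex_{r-1}(n,\mF)$ for every $n$. The content of the lemma is the reverse inequality $\ga(\mF)\le\ga(\mF\texthom)$, and for it I would use the standard supersaturation-plus-blowup argument.

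Fix $\eps>0$ and let $\mH$ be an $n$-vertex $\mF$-free $r$-graph with $\drmh\ge(\ga(\mF\texthom)+2\eps)n$, where $n$ is large. The goal is a contradiction. First I note that $\mF\texthom$ restricted to images of members of $\mF$ is a finite family, because every homomorphic image of $F\in\mF$ has at most $|V(F)|$ vertices. Let $t=\max_{F\in\mF}|V(F)|$. Then every member of $\mF\texthom$ has at most $t$ vertices.

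The key step is a codegree supersaturation statement. If $\drmh\ge(\ga(\mF\texthom)+2\eps)n$, then $\mH$ contains at least $c n^{s}$ copies of some fixed $G\in\mF\texthom$ on $s\le t$ vertices, for some $c=c(\eps,\mF)>0$. I would prove this by random sampling. Choose $m$ large but fixed so that $\ex_{r-1}(m,\mF\texthom)<(\ga(\mF\texthom)+\eps)m$, and pick a uniformly random $m$-subset $M\subseteq V(\mH)$. For each $(r-1)$-set $S\subseteq M$, the codegree $d_M(S)$ is hypergeometric with mean at least $(\ga(\mF\texthom)+2\eps)(m-r+1)$. By Chernoff/hypergeometric tail bounds and a union bound over the $\b{m}{r-1}$ sets, $\mH[M]$ has minimum codegree at least $(\ga(\mF\texthom)+\eps)m$ with probability at least $1/2$ once $m$ is large. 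Such an $\mH[M]$ contains some member of $\mF\texthom$. Since $\mF\texthom$ has finitely many members up to isomorphism (all with at most $t$ vertices), pigeonhole yields a single $G$ appearing in at least a $1/(2|\mF\texthom|)$ fraction of the $m$-sets. Double counting then gives $\Omega(n^{s})$ copies of $G$ in $\mH$. I expect this union-bound concentration to be the main technical point, but it is routine because $m$ is a constant independent of $n$.

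Next I pass from many copies of $G$ to a blowup. The $r$-graph on $V(\mH)$ whose hyperedges are the vertex sets of copies of $G$ (taking $s$-uniformity, with labeled copies) has positive density. By Erd\H{o}s's theorem~\cite{erdos1964extremal} on complete $s$-partite $s$-graphs, applied to the $s$-partite auxiliary structure after a random partition, $\mH$ contains the blowup $G(T)$ for $T=t$: each vertex of $G$ is replaced by $T$ copies, and each edge by all corresponding $r$-tuples. Finally, $G$ is a homomorphic image of some $F\in\mF$ via $f$, and $|V(F)|\le t=T$. Choosing distinct preimages inside the blown-up classes embeds $F$ injectively into $G(T)\subseteq\mH$, because edges of $F$ map to edges of $G$. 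This contradicts the $\mF$-freeness of $\mH$. Letting $\eps\to0$ gives $\ga(\mF)\le\ga(\mF\texthom)$.
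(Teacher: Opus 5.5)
Your proof is correct and follows the same route as the paper. Both arguments rest on two facts: every $F\in\mF$ embeds in the $t_0$-blowup of any of its homomorphic images, so an $\mF$-free graph is $\mF\texthom[t_0]$-free; and blowing up a finite family does not change its codegree Tur\'an density. The only difference is that the paper cites the blowup invariance (Lemma 2.3 of Keevash--Zhao~\cite{keevash2007codegree}), whereas you reprove it via random-sampling supersaturation and Erd\H{o}s's theorem on a randomly partitioned $s$-partite auxiliary $s$-graph, which is the standard proof of that lemma.
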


\begin{proof}
Given an $r$-graph $\mH$, its \emph{$t$-blowup} $\mH[t]$ is formed by replacing each vertex of $\mH$ by an independent set of size $t$ and replacing each edge of $\mH$ with a copy of the complete $r$-partite $r$-graph $K_{t,\ldots,t}^{(r)}$. Write $\mF[t]=\{\mH[t]:\mH\in\mF\}$. Observe that if $\mH'\in\mF\texthom$ is a homomorphic image of $\mH\in\mF$, then $\mH\subseteq\mH'[t]$ for any $t\geq|V(\mH)|$. Thus, for $t_0=\max_{\mH\in\mF}|V(\mH)|$, we have 
$\ga (\mF) \le \ga(\mF\texthom[t_0])$.
Also, a well-known fact is that the codegree Tur\'an density of any finite family is the same as the codegree Tur\'an density of its blow-up, see Lemma 2.3 in~\cite{keevash2007codegree}, so we have $ \ga (\mF) \le \ga(\mF\texthom[t_0])= \ga(\mF\texthom)$.
 
The other direction $\ga(\mF\texthom)\le \ga(\mF)$ holds by definition.
\end{proof}

\begin{lemma} \label{lem::codEq}
    For every uniformity $r \ge 2$ and residue class $k \not\equiv 0 \pmod r$, there is an integer $L$ such that for every $\l \ge L$ with $\l \equiv k \pmod r$, we have $\gamma (\C) = \pich$. 
\end{lemma}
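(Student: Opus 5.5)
The equality $\gamma(\C) = \pich$ will follow from two inequalities. The easy direction is $\gamma(\C) \ge \pich$: every $\C$ with $\l \equiv k \pmod r$ belongs to $\Ckr \subseteq \Ckrh$. Hence any $\Ckrh$-free $r$-graph is $\C$-free, and so $\ex_{r-1}(n,\C) \ge \ex_{r-1}(n,\Ckrh)$.

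For the reverse inequality $\gamma(\C) \le \pich$, I will pass through a finite family so that \cref{lem:supersaturation} applies. The first step is a compactness statement: for every $\eps>0$ there is a finite $L_0$ with $\ga(\CkLrh) \le \pich + \eps$, where $\CkLr$ denotes the tight cycles in $\Ckr$ of length less than $L_0$. I expect to obtain this from \cref{coloringLemma}. The homomorphism-closed family $\Ckrh$ is characterized by the existence of accordant $A_{\cyc^k}$-colorings, and the obstruction to such a coloring is witnessed by a closed tight walk whose length is bounded in terms of $r$ alone, namely by the size of the relevant finite group data. Consequently, forbidding only the homomorphic images of the finitely many short cycles already forces $\Ckrh$-freeness. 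This would even give $\ga(\CkLrh) = \pich$ exactly, and \cref{lem:supersaturation} then yields $\ga(\CkLr) = \ga(\CkLrh) = \pich$.

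The second step transfers this bound to a single long cycle $\C$ with $\l \ge L$. Let $\mH$ have $\delta_{r-1}(\mH) \ge (\pich+\eps)n$. By supersaturation for finite families (again through blowups, as in the proof of \cref{lem:supersaturation}), $\mH$ contains a blowup $\mathcal{C}^r_{m}[t]$ for some $m<L_0$ with $m\equiv k \pmod r$ and $t$ large. Moreover, since $\pich>0$ is an absolute constant and the codegree exceeds that of the $r$-partite case, $\mH$ also contains a large blowup of $\mathcal{C}^r_{r}$. The plan is to splice the two: wind once around the blowup of the short cycle $\mathcal{C}^r_m$, and then wind many times around a blowup of a cycle of length divisible by $r$, connecting the pieces by a tight path whose existence is guaranteed by the codegree condition. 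The resulting length is $m + ar + c$, where the connecting offset $c$ is fixed; choosing $a$ suitably produces every sufficiently large $\l \equiv k \pmod r$, so $\mH \supseteq \C$. Letting $\eps \to 0$ gives $\gamma(\C)\le\pich$.

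The main obstacle is this connection step. It needs a tight-connectivity statement saying that, under positive minimum codegree (in fact above $\pich$), the blowups sit within a common tight component, and that the lengths of the connecting walks can be adjusted without changing the residue modulo $r$. I would first try to bypass the connection altogether by citing the results of \cite{sankar2024turan} (Propositions 2.3 and 4.1), which work in a single tight component. Alternatively, I would realize both windings inside one blowup $\mathcal{C}^r_m[t]$ itself: a blowup of $\mathcal{C}^r_m$ contains closed tight walks of every length $m+ar$ with $a \ge 0$, since one can traverse $r$ consecutive parts as a $K^{(r)}_{t,\dots,t}$, which pads the length by multiples of $r$. With $t \ge \l$, these walks can be chosen injective, which would give $\C$ directly with no connection argument at all.
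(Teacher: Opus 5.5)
Your second step works in its fallback form, and it matches the paper. Inside $\mathcal{C}^r_m[t]$ with $t\ge\l$, you pad a closed tight walk by repeating a window of $r$ consecutive parts. This gives a copy of $\C$ for every $\l\ge m$ with $\l\equiv m \pmod r$, which is what the paper obtains from \cref{lem:supersaturation} and Proposition~2.3 of \cite{sankar2024turan}. The splicing and tight-connectivity detour is unnecessary, and the connection step is not the real obstacle.

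The real obstacle is your first step, and the justification you give for it is false. A failure to admit an accordant $A_{\cyc^k}$-coloring is not always witnessed by a closed tight walk whose length is bounded by $r$ or by the group data alone. For $r=2$, $k=1$, the graph cycle $C_{2m+1}$ is not bipartite, yet it admits no homomorphism from any odd cycle shorter than $2m+1$. So forbidding $\CkLrh$ does not in general force $\Ckrh$-freeness, and \cref{coloringLemma} by itself gives you no finite $L_0$. You also need $L_0$ independent of $\eps$. With only $L_0=L_0(\eps)$ you would get $\gamma(\C)\le\pich+\eps$ for $\l\ge L_0(\eps)$. That yields only $\limsup_{\l}\gamma(\C)\le\pich$, not equality for all $\l\ge L$.

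The missing ingredient is the minimum codegree hypothesis. The paper first notes, via \cref{construction} with $p=r/\gcd(r,k)$ and \cref{lem::conCkrhfree}, that $\pich\ge\gcd(r,k)/r\ge 1/r$. Hence any $\CkLrh$-free $\mH$ with $\delta_{r-1}(\mH)\ge(\pich+\eps)n$ has $\delta_{r-1}(\mH)\ge n/r$. It then uses the proofs of Proposition~4.1 and Corollary~4.2 of \cite{sankar2024turan}: an $n$-vertex $\CkLrh$-free $r$-graph with $\delta_{r-1}(\mH)\ge n/r$ is $\Ckrh$-free once $L$ is large in terms of $r$.

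It is the linear codegree, not the finiteness of the group data, that bounds the length of a shortest witnessing closed walk. This is what makes $L$ depend only on $r$ and gives $\gamma(\CkLrh)=\pich$ exactly. With that input replacing your Step~1 argument, the rest of your outline goes through.
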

\begin{proof}
    Let $L$ be sufficiently large and fix an integer $\l \ge L$ with $\l \equiv k \pmod r$.
    As $\C \in \Ckrh$, we have $\pich \le \gamma(\C)$. By \cref{lem:supersaturation}, it suffices to prove that $\gamma(\C\texthom) \le \pich$.

    Let $\CkLr$ be the family of tight cycles whose length is congruent to $k$ modulo $r$ and is less than $L$. We first claim that $\gamma(\CkLrh) = \pich$. Indeed, by \cref{construction} and \cref{lem::conCkrhfree}, we have $\pich \ge \gcd(r,k)/r \ge 1/r$. Then, by the proofs of Proposition 4.1 and Corollary 4.2 in~\cite{sankar2024turan}, any $n$-vertex $\CkLrh$-free $r$-graph $\mH$ with $\delta_{r-1}(\mH) \ge n/r$ is also $\Ckrh$-free, given that $L$ is sufficiently large with respect to $r$. Hence, we have $\gamma(\CkLrh) \le \pich$.

    Then, by Proposition 2.3 in~\cite{sankar2024turan}, we have that every $\C$-hom-free $r$-graph is also $\CkLrh$-free, given $\l \ge L$ and $\l \equiv k \pmod r$, so $\gamma(\Ch) \le \gamma(\CkLrh)$.
\end{proof}

\cref{thm::main} is now a direct corollary of the following slightly stronger proposition, which cha\-racterizes the pairs $(r,k)$ such that $\gamma(\Ckrh) > 1/3$. 
\begin{theorem} \label{pro::main}
Fix a uniformity $r \ge 2$ and a residue class $k \not\equiv 0 \pmod r$.
If $r$ is even and $\sit$ is available for $\Ckrh$ for every odd $i \in [1,r-1]$, then $\gamma(\Ckrh) = 1/2$.
Otherwise, $\gamma(\Ckrh) \le 1/3$.    
\end{theorem}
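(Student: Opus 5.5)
The plan is to handle the two regimes separately: first the lower bound $1/2$ in the special case, then a general upper bound $1/2$, and finally the bound $1/3$ in the remaining case.

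\emph{Lower bound $1/2$ in the special case.} Suppose $r$ is even and every $\sit$ with $i$ odd is available. Take the complete oddly bipartite $r$-graph on $V_1\sqcup V_2$. Color each edge $e$ by $S_i\times S_{r-i}$, where $i=|e\cap V_1|$ is odd, setting $\hp i(e)=e\cap V_1$ and $\hm i(e)=e\cap V_2$. If two edges share an $(r-1)$-set $W$, their colors and sign partitions agree on $W$ and are consistent with \eqref{eq:accordance-h-pm}, so this coloring is accordant. It only uses available colors, so \cref{coloringLemma} shows the graph is $\Ckrh$-free. Its minimum codegree is $(1/2-o(1))n$.

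\emph{General upper bound $1/2$.} By \cref{coloringLemma} and, once $\drmh>n/3$, by \cref{onlySi}, every edge $e$ is colored by some $S_i\times S_{r-i}$ with $1\le i\le r-1$; the extreme cases are excluded because $S_r\ni\cyc^k$. Pick $x\in\hp i(e)$ and $x'\in\hm i(e)$. The transposition argument from the proof of \cref{onlySi} shows that $N(e-x)$ and $N(e-x')$ are disjoint, since $(xx')$ is not in the stabilizer. Hence $2\drmh\le n$, which gives $\gamma(\Ckrh)\le 1/2$ in all cases.

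\emph{Upper bound $1/3$ otherwise.} Assume $\drmh>(1/3+\eps)n$. The aim is to show that $r$ is even and every odd $i$ is available, which covers the ``otherwise'' case by contraposition. Two observations drive the argument. First, accordance forces all edges through a common $(r-1)$-set $W$ to carry the same color, and \eqref{eq:accordance-h-pm} tells us, for each $y\in N(W)$, whether $y$ joins the $+$ side or the $-$ side. Second, three pairwise disjoint neighborhoods of size $>n/3$ cannot exist.

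The first and main step is to build a global partition $V=P\sqcup M$ such that, for every edge $e$ colored $S_j\times S_{r-j}$, we have $\hp j(e)=e\cap P$, after swapping $\pm$ where needed, including the ambiguous case $j=r/2$. I would define $P$ locally from one edge and propagate it. For this I would use the pigeonhole fact that among any three neighborhoods $N(W_1),N(W_2),N(W_3)$ two must intersect. This should show that a vertex cannot be ``$+$'' relative to one $(r-1)$-set and ``$-$'' relative to another with which it is compared, because otherwise we would find three disjoint neighborhoods as in \cref{onlySi}. This consistency step is where I expect the real difficulty. What I would try first is to compare $W$ and $W'$ differing in one vertex and to chain along the large, heavily overlapping neighborhoods.

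Given the partition, the color of an edge $e$ is exactly $S_{|e\cap P|}\times S_{r-|e\cap P|}$. Let $J$ be the set of values $|e\cap P|$ that occur. By accordance, an $(r-1)$-set $W$ with $|W\cap P|=t$ has its neighbors either all in $P$ (giving $j=t+1$) or all in $M$ (giving $j=t$), never both. Since $j\in\{0,r\}$ is forbidden, sets with $t=0$ must have neighbors in $P$ and sets with $t=r-1$ must have neighbors in $M$. The codegree bound then forces $|P|,|M|>n/3$, so every value of $t$ is realized by some $W$. I would then show that $J$ cannot contain two consecutive integers. The idea is to take edges $e\ni y$ with values $j$ and $j+1$ that can be connected by single-vertex swaps, and to produce an $(r-1)$-set with neighbors on both sides, or three disjoint neighborhoods. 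Consequently $J$ is exactly the odd numbers or exactly the even numbers in $[1,r-1]$. Since $t=0$ forces $1\in J$, $J$ consists of the odd numbers. Since $t=r-1$ forces $r-1\in J$, the integer $r$ is even. Finally, every color $S_j\times S_{r-j}$ with $j\in J$ is available, so every odd $j$ is available, which is the desired contradiction.
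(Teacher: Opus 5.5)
Your lower-bound construction and your general $1/2$ bound are correct and match the paper (\cref{le1over2} is exactly the disjointness of $N(e-x)$ and $N(e-x')$). Your endgame is also sound \emph{once} a partition $V=P\sqcup M$ compatible with every edge is in hand. Then each $(r-1)$-set has all its neighbours on one side, and $j,j+1\in J$ can be ruled out by three disjoint neighbourhoods around a suitable $(r-2)$-set.

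The genuine gap is that this global partition, which carries essentially all of the content of the theorem, is never proved. The mechanism you suggest for it does not work as stated. A vertex's sign relative to an $(r-1)$-set is not an invariant that can be propagated along one-vertex swaps: signs only mean something relative to a fixed colour, and the same colour occurs with the sides reversed. Take the oddly bipartite $4$-graph on $P\sqcup M$ with its natural colouring (codegree about $n/2$). The sets $W=\{p_1,m_1,m_2\}$ and $W'=\{p_1,p_2,m_1\}$ differ in one vertex, and every edge through either is coloured $S_1\times S_3$. Yet $p_1$ is the distinguished vertex for $W$ and not for $W'$. For $r\ge 6$, two $(r-1)$-sets sharing an $(r-2)$-set can even carry different colours, e.g.\ $S_1\times S_5$ versus $S_3\times S_3$ when $r=6$. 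So the three-neighbourhood pigeonhole only becomes usable after you already know how the edges through the various sets $U+v$ are split, which is the thing to be proved.

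What is missing is the paper's level-by-level argument:
\begin{enumerate}
\item For an $i$-good $(r-2)$-set $U$, build the local partition $X_i^\pm(U)$ (\cref{propertyX}).
\item Show that the split of $U$ does not depend on the witness (\cref{parUindW}), and that the partition is invariant under replacing a vertex of $U$ by one on the same side (\cref{goodMoveOne}).
\item Deduce that every $(i,r-i-1)$- and $(i-1,r-i)$-set is coloured $S_i\times S_{r-i}$ compatibly with it (\cref{colorInBetween}). Hence there are no $(i\pm1,r-i\mp1)$-edges, and \cref{manyToColoring} gives $S_{i\pm2}\times S_{r-i\mp2}$.
\item \cref{nor-2} excludes $S_{r-2}\times S_2$ using the $1/2$ bound inside $X^+(U)$.
\end{enumerate}
The paper never needs one partition for all colours.

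If you want your route, you must additionally show two things. First, the partition coming from an $(i+2)$-good set $U_1$ inside an $(i+2,r-i-2)$-edge of $(X_i^+(U),X_i^-(U))$ must equal $X_i^\pm(U)$. This is true: one gets $X_i^-(U)\subseteq X_{i+2}^-(U_1)$. A nonempty difference $D$ has size less than $n/3$, yet it must contain the whole neighbourhood of an $(i,r-i-1)$-set of the new partition meeting $D$ in one vertex. Second, every edge must be split along the resulting partition, and in particular no edge lies inside $P$ or $M$. None of this appears in your proposal.
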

\begin{proof}[Proof of \cref{thm::main} assuming \cref{pro::main}]
    Given the uniformity $r$ and residue class $k$, by Lem\-ma~\ref{lem::codEq}, it suffices to prove that $\gamma(\Ckrh) \le 1/3$, and by \cref{pro::main}, we just need to prove that it cannot be the case that $r$ is even and $\sit$ is available for every odd $i \in [1,r-1]$. Suppose for a contradiction, that it is the case.
    Write $m$ for $\gcd(r,k)$; by our assumption in \cref{thm::main}, $r/m$ is odd.
    Note that $\cyc^k$ consists of $m$ cycles, each of which has length $r/m <r$, and hence, $S_i \times S_{r-i}$ contains a conjugate of $\cyc^k$ if and only if $r/m \mid i$ and $r/m \mid r-i$. 
    Now, since $r/m \mid r/m $ and $r/m \mid (r-r/m)$, we have that $S_{r/m} \times S_{r-r/m}$ contains a conjugate of $\cyc^k$, a contradiction to \cref{coloringLemma}. 
    \qedhere
\end{proof}

In the remaining part of this section, we  prove \cref{pro::main}. The main idea is to analyze  the structure of the link graphs of some $(r-2)$-sets, which will give a partition of all the vertices.
A sketch of the proof will also be given at the beginning of our proof for \cref{pro::main}.
We first use this idea to prove an easy upper bound $1/2$, which will be used later. 
Recall that for every edge $e$ and integer $i \in [1,r-1]$, $h_i^+(e)$ is the set of vertices $v \in e$ of $S_i$ and $h_i^- (e)$ is the set of vertices $v \in e$ of $S_{r-i}$. 

\begin{lemma} \label{le1over2}
    For every uniformity $r \ge 2$ and residue $k \not\equiv 0 \pmod r$, we have
    $ \gamma(\Ckrh) \le 1/2$.
\end{lemma}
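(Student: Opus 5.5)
Suppose $\mH$ is an $n$-vertex $\Ckrh$-free $r$-graph with $\drmh > (1/2+\eps)n$. The plan is to derive a contradiction from the accordant coloring $\chi:\EdH\to A_{\cyc^k}$ given by \cref{coloringLemma}. Since $1/2 > 1/3$, \cref{onlySi} applies: every color used is (a conjugate of) some $\sit$. Fix an edge $e$ of color $\sit$ with $1\le i\le r-1$, and the partition $e=\hp i(e)\sqcup\hm i(e)$. Both parts are nonempty, so we may choose $u\in\hp i(e)$ and $w\in\hm i(e)$.

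The key step mimics the disjointness argument in the proof of \cref{onlySi}, but with two neighbourhoods. Let $S=e-\{u,w\}$ be the common $(r-2)$-set, and set
\[
X_u=N(e-u),\qquad X_w=N(e-w).
\]
We claim $X_u\cap X_w=\es$. Suppose $v\in X_u\cap X_w$, and order $e$ as $\vx=uw\vec s$, where $\vec s$ lists $S$. Applying accordance to the oriented edges $vw\vec s$ and $uw\vec s$ (which differ in the first coordinate) and to $uv\vec s$ and $uw\vec s$, we get
\[
\chi(vw\vec s)=\chi(uw\vec s)=\chi(uv\vec s).
\]
Now apply $(12)$ to $vw\vec s$ to obtain $wv\vec s$. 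By accordance on the first coordinate (replacing $w$ by $u$), $\chi(wv\vec s)=\chi(uv\vec s)=\chi(\vx)$. Hence $(12)\chi(\vx)=\chi(\vx)$, so $(12)$ lies in the stabilizer of $\chi(\vx)$, which is a conjugate of $\sit$. That stabilizer is exactly the subgroup permuting $\hp i(e)$ and $\hm i(e)$ separately, so it contains no transposition swapping a $+$ vertex with a $-$ vertex. This is a contradiction, so $X_u\cap X_w=\es$.

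The conclusion is then immediate:
\[
n\ge |X_u|+|X_w|\ge 2\drmh>n.
\]
So any $\Ckrh$-free $\mH$ has $\drmh\le n/2$, giving $\gamma(\Ckrh)\le 1/2$.

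The main point to check carefully is the identification of the stabilizer. For the coset $\sg\Ga$ with $\Ga=\sit$, the stabilizer is $\sg\Ga\sg^{-1}$, and positions $1,2$ of $\vx$ hold $u\in\hp i(e)$ and $w\in\hm i(e)$; one must verify that these correspond to different orbits of $\sg\Ga\sg^{-1}$. This is exactly the definition of $\hpm i(e)$, so the step is bookkeeping rather than a genuine obstacle. Alternatively, one can avoid stabilizers altogether and argue as in the proof of \cref{onlySi}, taking $\chi(\vx)=\Ga$ after relabeling so that positions $1\in I$ and $2\in J$ lie in different $\sim_\Ga$ classes.
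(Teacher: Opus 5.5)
Your proof is correct and is essentially the paper's argument: the paper also fixes an edge $e$ colored with $S_i\times S_{r-i}$ and shows that $N(e-a_i)$ and $N(e-b_{r-i})$, with $a_i\in h_i^+(e)$ and $b_{r-i}\in h_i^-(e)$, are disjoint sets each of size greater than $(1/2+\eps)n$. The only difference is presentation: the paper derives the disjointness from the role of $v$ in the $(r-1)$-set $U+v$ with $U=e-a_i-b_{r-i}$ (its sets $X^\pm$), whereas you verify it directly with the transposition--stabilizer computation from the proof of \cref{onlySi}.
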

\begin{proof}
    Suppose for a contradiction that $\pich > 1/2$. Let $\eps > 0$ and $n$ be a sufficiently large integer, and assume that $\mH$ is an $n$-vertex $\Ckrh$-free colored $r$-graph with $\drmh > (1/2+\eps)n$. 

    Fix an arbitrary edge $e$ in $\mH$.
    By \cref{onlySi}, we can assume that $e$ is colored with $S_i \times S_{r-i}$ for some $i \in [1,r-1]$.
    We assume that $e = \{a_1,\ldots, a_i, b_1,\ldots,b_{r-i}\}$, where
    $A \ce \{a_1,\ldots, a_{i}\} = h_i^+(e)$, and $B \ce \{b_1,\ldots,b_{r-i}\} = h_i^-(e)$. 
    Let $A' \ce \{a_1,\ldots,a_{i-1}\}$, $B' \ce \{b_1,\ldots,b_{r-i-1}\}$, and let $U \ce A' \cup B'$. We consider the coloring of each vertex $v \in V(\mH) \sm U$ in $U + v$.
    Define 
    \begin{align*}
        X^+ & \ce A \cup 
        \{v \in V(\mH) \sm e: h_i^+(U+v) = A' + v  \  \textrm{ and }\  h_i^-(U+v) = B'\}, \\
        X^- & \ce B \cup 
        \{v \in V(\mH) \sm e: h_i^+(U+v) = A' \ \textrm{ and }\  h_i^-(U+v) = B' + v \},
    \end{align*}
    see \cref{fig::le1over2}.
    Note that $X^+$ and $X^-$ are disjoint, $a_i \in X^+$ and $b_{r-i} \in X^-$. Also, note that $N(U+a_i) \subseteq X^-$ by the coloring of $U + a_i$, so 
    \[
        |X^-| \ge d(U+a_i) \ge \drmh > (1/2+\eps)n.
    \]
    Similarly, we have $|X^+| > (1/2+\eps)n$. However, this is a contradiction to the fact that $X^+$ and $X^-$ are disjoint. \qedhere
\end{proof}
The following lemma will be used later to show that some color has to be used: it essentially says that if there are two disjoint sets in an $r$-graph with lots of $(i,r-i)$-edges, then the color $S_i \times S_{r-i}$ has to be used.
\begin{figure}[ht]
  \centering
  \resizebox{0.45\linewidth}{!}{
    \begin{tikzpicture}[line cap=round, line join=round, font=\small, yscale=0.8]
      \draw[thick] (0,0) rectangle (7,2.2);
      \node at (6.65,1.4) {$X^-$};

      \draw[thick] (0,2.8) rectangle (7,5.0);
      \node at (6.65,4.3) {$X^+$};

      \coordinate (a1) at (1.1,3.8);
      \coordinate (a2) at (2.2,3.8);
      \coordinate (a3) at (3.5,3.8);
      \coordinate (a)  at (5.5,3.8);
      \fill (a1) circle (1.6pt) node[above=1pt] {$a_1$};
      \fill (a2) circle (1.6pt) node[above=1pt] {$a_2$};
      \fill (a3) circle (1.6pt) node[above=1pt] {$a_3$};
      \fill (a)  circle (1.6pt);

      \coordinate (b1) at (1.1,0.8);
      \coordinate (b2) at (2.2,0.8);
      \coordinate (b3) at (3.5,0.8);
      \coordinate (b)  at (5.5,0.8);
      \fill (b1) circle (1.6pt) node[above=1pt] {$b_1$};
      \fill (b2) circle (1.6pt) node[above=1pt] {$b_2$};
      \fill (b3) circle (1.6pt) node[above=1pt] {$b_3$};
      \fill (b)  circle (1.6pt);

      \draw[thick, rounded corners=6pt] (0.45,0.2) rectangle (4,4.75);
      \node at (3.8,4.55) {$e$};

      \draw[thick, rounded corners=6pt] (0.85,0.4) rectangle (2.8,4.4);
      \node at (2.6,4.2) {$U$};
    \end{tikzpicture}
  }
  \caption{$r = 6$ and $i = 3$. The coloring of every vertex $v \in X^+ \sm e$ in $U+v$ is the same as $a_3$ in $e$. The coloring of every vertex $v \in X^- \sm e$ in $U+v$ is the same as $b_3$ in $e$.}
  \label{fig::le1over2}
\end{figure}
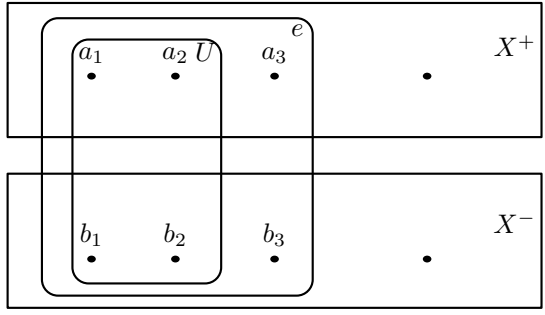

\begin{lemma} \label{manyToColoring}
    Let $\mH$ be an $n$-vertex $\Ckrh$-free colored $r$-graph in which all colors used are in $\{S_{t}\times S_{r-t}: t \in [r-1]\}$. 
    Let $X,Y$ be two disjoint subsets of $V(\mH)$ such that $|X|,|Y| \ge r$.
    Let $i$ be some fixed integer in $[r-1]$.
    Suppose that for every $(i-1,r-i)$-set $W$ of $(X,Y)$, we have $d_X(W) > |X|/2$.
    Then, $S_i \times S_{r-i}$ is used in $\mH$.
\end{lemma}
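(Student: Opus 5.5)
The plan is to take a single $(i,r-i)$-edge $e$ of $(X,Y)$ and show, by an accordance argument, that its coloring splits $e$ exactly into its $X$-part and its $Y$-part; this forces $e$ to be colored with $S_i\times S_{r-i}$. To get such an edge, pick any $(i-1,r-i)$-set $W$ of $(X,Y)$; this is possible since $|X|,|Y|\ge r$. Because $d_X(W)>|X|/2>0$, there is $v\in X$ with $e\ce W+v\in\mH$. Write $e=A\sqcup B$ with $A=e\cap X$, $|A|=i$, and $B=e\cap Y$, $|B|=r-i$. By hypothesis $e$ is colored with some $S_t\times S_{r-t}$, with parts $h_t^\pm(e)$.

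\textbf{Step 1: $A$ lies inside one part.} Take distinct $a,a'\in A$ and set $U\ce e-a-a'$. Both $U+a$ and $U+a'$ are $(i-1,r-i)$-sets of $(X,Y)$, so each has more than $|X|/2$ neighbours in $X$. Hence there is a common neighbour $v\in X$. If $v\in\{a,a'\}$, the conclusion is trivial, so assume not.

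Now run the argument from the proof of \cref{onlySi}. Let $\vx$ be an orientation of $e$ with $a,a'$ in coordinates $1,2$. Use accordance along the chain of edges
\[
e=U+a+a',\qquad U+a+v,\qquad U+a'+v,
\]
where consecutive edges share $r-1$ vertices, and the last edge shares $U+a'$ with $e$. This gives $\chi(\vx)=\chi((12)\vx)=(12)\chi(\vx)$. So the transposition of the coordinates of $a$ and $a'$ stabilizes $\chi(\vx)$, which means $a$ and $a'$ lie in the same part of $e$. Equivalently, by \eqref{eq:accordance-h-pm}, $v$ inherits the side of $a'$ in $U+a+v$, then $a'$ inherits the side of $a$ in $U+a'+v$, and this must agree with the coloring of $e$. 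Consequently $A\subseteq h_t^+(e)$, after relabelling $t\leftrightarrow r-t$ if necessary.

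\textbf{Step 2: no vertex of $B$ lies with $A$.} The same trick, with the roles of the two coordinates changed, lets us move vertices of $B$. For $b,b'\in Y$ with $A'+B$ and $A'+B-b+b'$ both $(i-1,r-i)$-sets, a common $X$-neighbour exists. Since $|Y|\ge r$, this transports the coloring pattern of $e$ to edges with different $Y$-parts. To exclude $t>i$, I would argue by contradiction. Suppose $b\in B\cap h_t^+(e)$. Then use a common $X$-neighbour $v$ of $e-a$ and $e-a-b+b'$, for suitable $a\in A$ and $b'\in Y$, to build a short closed walk of edges. Accordance along this walk should yield a permutation in the stabilizer $\Ga$ that moves a $Y$-coordinate to an $X$-coordinate while fixing the sides. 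The aim is to combine this with Step 1, and the fact that the stabilizer of $\chi(\vx)$ is exactly $S_{h^+}\times S_{h^-}$, to show that every vertex of $B$ can be exchanged with vertices of $X$ only if $t=i$. This would give $h_t^+(e)=A$ and $h_t^-(e)=B$, so $e$ is colored with $S_i\times S_{r-i}$.

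I expect Step 2 to be the main obstacle. The hypothesis only controls neighbourhoods inside $X$, so the vertices of $Y$ can never be swapped directly by the Step 1 argument. If the closed-walk approach fails, my fallback is an extremal choice. Among all $(i,r-i)$-edges of $(X,Y)$, take one whose part containing its $X$-vertices is as small as possible. Then show that a $Y$-vertex sharing a part with $A$ can be traded, via the common-neighbour step, for a configuration with a smaller such part, contradicting minimality.
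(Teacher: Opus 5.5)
Your Step 1 is correct and is exactly the first case of the paper's proof: a common $X$-neighbour of $e-a$ and $e-a'$ forces two $X$-vertices of $e$ onto the same side. The genuine gap is Step 2. It carries the real content of the lemma, and you only state it as an aim. Since $A\subseteq h_t^+(e)$ and $h_t^-(e)\neq\es$, what must be ruled out is that $B$ is split, i.e.\ that there are $y_1\in B\cap h_t^+(e)$ and $y_2\in B\cap h_t^-(e)$. Because the hypothesis only controls neighbourhoods in $X$, you cannot swap $y_1,y_2$ directly.

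The missing idea is to use a fresh vertex $y\in Y\sm e$ as a pivot; this is where $|Y|\ge r$ is needed. Fix $x\in A$ and consider the $(i-1,r-i)$-sets $W_0=e-x$, $W_1=W_0-y_1+y$ and $W_2=W_0-y_2+y$.
\begin{itemize}
\item A common $X$-neighbour $v_{01}$ of $W_0,W_1$ gives edges $W_0+v_{01}$ and $W_1+v_{01}$ sharing the $(r-1)$-set $W_0-y_1+v_{01}$. Hence $y_2\in h_t^-(W_1)$.
\item Likewise, a common $X$-neighbour $v_{02}$ of $W_0,W_2$ gives $y_1\in h_t^+(W_2)$.
\item Finally, a common $X$-neighbour $v_{12}$ of $W_1,W_2$ gives edges $W_1+v_{12}$ and $W_2+v_{12}$ sharing $(W_1\cap W_2)+v_{12}$. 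In these, $y_2$ and $y_1$ are the exchanged vertices, so by \eqref{eq:accordance-h-pm} they lie on the same side, a contradiction.
\end{itemize}
In your closed-walk language, the walk $W_0+x,\ W_0+v_{01},\ W_1+v_{01},\ W_1+v_{12},\ W_2+v_{12},\ W_2+v_{02},\ W_0+v_{02},\ W_0+x$ certifies that the transposition of the coordinates of $y_1$ and $y_2$ (two $Y$-coordinates) stabilizes $\chi(\vx)$.

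The target you announce instead, a stabilizer element moving a $Y$-coordinate onto an $X$-coordinate, cannot be reached by any walk built from the degree hypothesis alone. Take $r$ even, $i$ odd, and the complete oddly bipartite $r$-graph on $X\sqcup Y$ with $|X|,|Y|$ large. It satisfies the hypothesis, since $d_X(W)=|X|-(i-1)$. It also carries the accordant coloring in which every edge $e$ is colored $S_{|e\cap X|}\times S_{|e\cap Y|}$ with $h^+(e)=e\cap X$. So any permutation certified by a closed walk from an $(i,r-i)$-edge $e$ must map the coordinates of $e\cap X$ to themselves.

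Your fallback is vacuous for a similar reason. Chaining common $X$-neighbours, any two $(i,r-i)$-edges of $(X,Y)$ are joined by a sequence of edges consecutively sharing $r-1$ vertices. Hence all of them receive the same color $S_t\times S_{r-t}$ and, by \eqref{eq:accordance-h-pm}, keep their $X$-vertices on the same side. The quantity you propose to minimize is therefore constant. What is actually required is an outright contradiction, such as the pivot argument above.
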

\begin{proof}
    We will prove a stronger statement that all  $(i,r-i)$-edges of $(X,Y)$ are colored with $S_i\times S_{r-i}$.
     Suppose for a contradiction that there is an $(i,r-i)$-edge $e$ of $(X,Y)$ such that $e$ is colored with $S_j \times S_{r-j}$ where $j \not\in\{ i, r-i\}$. Then, $h_j^+(e) \cap X$ and $h_j^-(e) \cap X$ are both non-empty or $h_j^+(e) \cap Y$ and $h_j^-(e) \cap Y$ are both non-empty.
    We claim that neither of them can happen.

    If there are vertices $x_1 \in h_j^+(e) \cap X$ and $x_2 \in h_j^-(e) \cap X$,
    then let $W_1 = e - x_1 $, $W_2 = e - x_2$, and $U = e - x_1 - x_2 = W_1\cap W_2$, see \cref{fig::lem13_1}. By our assumption, we have $d_X(W_1) > |X|/2$ and $d_X(W_2) > |X|/2$, so there is a vertex $v \in N_X(W_1) \cap N_X(W_2)$. Let $W = U + v$. Then, $x_1,x_2 \in N_X(W)$, so $W_1 = W - v + x_2$ and $W_2 = W - v + x_1$ should be of the same coloring, which in particular, means that either $x_1 \in h_j^+(W_2), x_2 \in h_j^+(W_1)$ or $x_1 \in h_j^-(W_2), x_2 \in h_j^-(W_1)$, but this is a contradiction to our assumption that $x_1 \in h_j^+(e)$ and $x_2 \in h_j^-(e)$.

    A similar argument holds for the second case.
    If there are vertices $y_1 \in h_j^+(e) \cap Y$ and $y_2 \in h_j^-(e) \cap Y$, then fix vertices $x \in e \cap X$, $y \in Y \sm e$ and let $W_0 = e - x$, $W_1 = e - x - y_1 + y$, $W_2 = e - x - y_2 + y$. Note that $y_1 \in h_j^+(W_0)$ and $y_2 \in h_j^-(W_0)$ by our assumption.
    Now, since $d_X(W_0) > |X|/2$ and $d_X(W_1) > |X| / 2$, there is a vertex $v_{01} \in N_X(W_0) \cap N_X(W_1)$ and we have edges $W_0 + v_{01}$ and $W_1 + v_{01}$. Note that both of them contain the same $(r-1)$-set $W_0 - y_1 + v_{01} = W_1 - y + v_{01}$, so $W_0$ and $W_1$ should be of the same coloring, in particular, we have $y_2 \in h_j^-(W_1)$ by the assumption that $y_2 \in h_j^-(W_0)$. Similarly, we have $y_1 \in h_j^+(W_2)$. However, since $d_X(W_1) > |X|/2$ and $d_X(W_2) > |X| / 2$,
    there is a vertex $v_{12} \in N_X(W_1) \cap N_X(W_2)$ and we have edges $W_1+ v_{12}$ and $W_2+v_{12}$. Hence, 
    we have either $y_1 \in h_j^+(W_2), y_2 \in h_j^+(W_1)$ or $y_1 \in h_j^-(W_2), y_2 \in h_j^-(W_1)$, a contradiction. \qedhere 
\end{proof}
\begin{figure}[ht]
    \centering
    \begin{tikzpicture}[scale=0.9, line cap=round, line join=round, font=\small, yscale=0.9]
    
      \draw[thick] (0,0) rectangle (7,2.2);
      \node at (6.65,1.1) {$Y$};
    
      \draw[thick] (0,2.8) rectangle (7,5.0);
      \node at (6.65,4.0) {$X$};
    
      \draw[thick, rounded corners=3pt] (0.9,0.4) rectangle (3.5,1.2);
      \node at (2.2,0.95) {$U$};
    
      \foreach \p in {(1.2,0.61),(2.2,0.61),(3.2,0.61)}{
        \fill \p circle (1.6pt);
      }
    
      \coordinate (x1) at (1.6,4.1);
      \coordinate (x2) at (2.8,4.1);
      \coordinate (v)  at (5.6,4.1);
    
      \fill (x1) circle (1.6pt) node[above=1pt] {$x_1$};
      \fill (x2) circle (1.6pt) node[above=1pt] {$x_2$};
      \fill (v)  circle (1.6pt) node[above=1pt] {$v$};
    
      \coordinate (uL) at (0.95,1.2);
      \coordinate (uR) at (3.45,1.2);
    
      \draw[thick] (uL) -- (x1);
      \draw[thick] (uR) -- (x2);
    
      \draw[thick] (uL) -- (x2);
      \draw[thick] (uR) -- (x1);
    
      \draw[thick] (uL) -- (v);
      \draw[thick] (uR) -- (v);
    
      \node at (1.75,3.1) {$W_2$};
      \node at (2.65,3.1) {$W_1$};
    
      \node at (4.6,3.2) {$W$};
    
      \draw[thick, rounded corners=6pt] (0.65,0.2) rectangle (3.75,4.75);
      \node at (3.55,4.55) {$e$};

    \end{tikzpicture}
        
    \caption{$r=5$ and $i=2$. Both $U+x_1+v$ and $U+x_2+v$ are edges in $\mH$. Hence, the coloring of $x_1$ in $U+x_1$ should be the same as the coloring of $x_2$ in $U+x_2$.}
    \label{fig::lem13_1}
\end{figure}
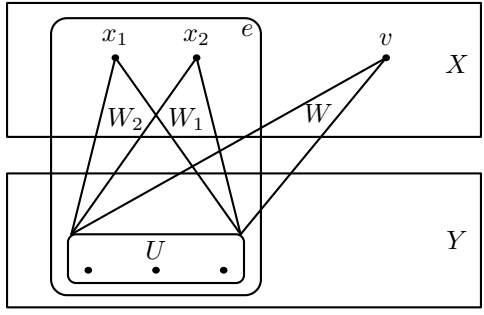

\begin{proof} [Proof of \cref{pro::main}]
    If $r$ is even and $\sit$ is available for every odd $i \in [1,r-1]$, then \cref{le1over2} gives that $\pich \le 1/2$.
    For the other direction, taking $p = 2$ in \cref{construction} works. More precisely, let $\mH$ be an $n$-vertex $r$-graph whose vertex set is partitioned into two parts $X_1,X_2$ of size $\lfloor n/2 \rfloor$, $\lceil n/2 \rceil$, respectively, and its edges are those $r$-sets which intersect both $X_1, X_2$ in an odd number of vertices. 
    Clearly, $\delta_{r-1}(\mH) = (1/2 - o(1)) n$.
    For an $(i,r-i)$-edge $e$ of $(X_1,X_2)$, we can color it with $S_i\times S_{r-i}$ where $h_i^+(e) = e \cap X_1$ and $h_i^-(e) = e \cap X_2$. Then by \cref{coloringLemma}, we have that $\mH$ is $\Ckrh$-free. This gives that $\gamma(\Ckrh) \ge 1/2$.

    Now, it suffices to prove that if $\gamma(\Ckrh) > 1/3$, then $r$ is even and $\sit$ is available for every odd $i \in [1,r-1]$.
    Let $\eps > 0$ and $n$ be sufficiently large. Assume that $\mH$ is an $n$-vertex $\Ckrh$-free colored $r$-graph with $\delta_{r-1}(\mH) > (1/3+\eps) n$. By \cref{onlySi}, we can assume that every color used in $\mH$ is of the form $S_i \times S_{r-i}$. 

    A sketch of the proof is as follows. Our main claim is that if some $S_i \times S_{r-i}$ is used in $\mH$, then $S_{i+2} \times S_{r-i-2}$ and $S_{i-2} \times S_{r-i+2}$ are also used, see \cref{itoi+2i-2}. For this purpose, we will choose an arbitrary edge colored with $S_i \times S_{r-i}$ and consider the link graph of a ``good'' $(r-2)$-set $U$ in $W$, like what we did in the proof of \cref{le1over2}. This will give us a partition $(X^+,X^-)$ of the vertex set of $\mH$. We then analyze the coloring of the $(i,r-i-1)$-sets and $(i-1,r-i)$-sets of $(X^+,X^-)$. By the coloring, we can verify that there is no $(i+1,r-i-1)$-edges and $(i-1,r-i+1)$-edges of $(X^+,X^-)$, which then forces the existence of many $(i+2,r-i-2)$-edges and $(i-2,r-i+2)$-edges, due to the minimum codegree assumption. Using \cref{manyToColoring}, we then get that $S_{i+2} \times S_{r-i-2}$ and $S_{i-2} \times S_{r-i+2}$ have to be used, as claimed. Additionally, a simple observation shows that $S_{r-2} \times S_2$ cannot be used in $\mH$, see \cref{nor-2}. Putting these two claims together, we immediately conclude that $r$ has to be even and $\sit$ is available for every odd $i \in [1,r-1]$. 

    An $(r-2)$-set $U$ is \emph{$i$-good} if there is an $(r-1)$-set $W \supset U$ such that $W$ is colored with $S_i \times S_{r-i}$, $|U \cap h_i^+(W)| = i - 1$, and $|U \cap h_i^-(W)| = r-i-1$; we say such $W$ \emph{witnesses} $U$. Recall that $h_i^{\pm}(W)$ is defined as $h_i^{\pm}(e)\cap W$ for any edge $e\supseteq W$. For every $i$-good $(r-2)$-set $U$, define
    \[
        Y^+_{i}(U) \ce \{v \in V(\mH) \sm U : v \in h^+_i(U+v) \}, \quad \quad 
        Y^-_{i}(U) \ce \{v \in V(\mH) \sm U : v \in h^-_i(U+v) \}.
    \]
    \begin{claim} \label{propertyY}
        For every $i$-good $(r-2)$-set $U$, we have 
        \[
            (1/3+\eps)n \le |Y^+_i(U)|, |Y^-_i(U)| \le (2/3 - \eps)n - (r-2),
        \]
        and $Y^+_i(U), Y^-_i(U)$ form a partition of $V(\mH) \sm U$.
    \end{claim}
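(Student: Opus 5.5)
Fix an $i$-good $(r-2)$-set $U$ and a witness $W = U + w$, colored with $S_i\times S_{r-i}$, with $|U\cap h_i^+(W)|=i-1$ and $|U\cap h_i^-(W)|=r-i-1$; in particular $w\in h_i^+(W)$. The plan is to prove first that $Y^+_i(U)$ and $Y^-_i(U)$ partition $V(\mH)\sm U$, and then to get the lower bounds from codegree and the upper bounds by complementation.

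For the partition, we must show that every $v\in V(\mH)\sm U$ lies in exactly one of the two sets. Disjointness is immediate: $h^\pm_i(U+v)$ are disjoint by definition. The substance is that $U+v$ is colored with $S_i\times S_{r-i}$ and that the signs on $U$ agree with those in $W$; then $v$ is forced to lie in $h^+_i(U+v)$ or $h^-_i(U+v)$. To see this, take any $u\in N(W)$, so $U+w+u$ is an edge. Accordance on the $(r-1)$-set $U+u$ then shows that $U+u$ has the same color as $W$. By \eqref{eq:accordance-h-pm}, the vertex $u$ inherits the sign of $w$, namely $+$, while the signs on $U$ are unchanged. Hence every $u\in N(W)$ lies in $Y^+_i(U)$, with $U$ keeping the prescribed signs.

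For a general $v$, we would use the codegree bound to connect $v$ to $W$. Since $d(W)$ and $d(U+v)$ both exceed $(1/3+\eps)n$, we cannot directly intersect $N(W)$ with $N(U+v)$, because $1/3+1/3<1$. This is the main obstacle. I would resolve it with the three-disjoint-sets trick from \cref{onlySi}. Take $u\in N(W)$ and $u'\in N(U+u)$ with $u'\neq w$; here $u'\in N(U+u)$ and $U+u$ has $u$ with sign $+$, so every element of $N(U+u)$ acquires sign $-$ on $U+u'$. The sets $N(W)$, $N(U+u)$ and $N(U+v)$ each have size more than $(1/3+\eps)n$, so two of them intersect. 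A common neighbor $z$ of $U+v$ and of either $U+w$ or $U+u$ then makes $U+v$ share the $(r-1)$-set $U+z$ with an edge whose coloring is known. Accordance propagates the color $S_i\times S_{r-i}$ and the signs on $U$ to $U+v$, so $v\in Y^+_i(U)\cup Y^-_i(U)$.

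For the sizes, take $v^+\in Y^+_i(U)$, which is nonempty, for example $w$. Every $z\in N(U+v^+)$ lies in $Y^-_i(U)$, since by accordance $z$ replaces $v^+$'s sign in the shared set, giving the opposite sign relative to $U+z$. Hence $|Y^-_i(U)|\ge d(U+v^+)\ge(1/3+\eps)n$. Symmetrically, using $v^-\in N(W)\subseteq Y^-$ under the correct sign bookkeeping, we get $|Y^+_i(U)|\ge(1/3+\eps)n$. Since the two sets partition a set of size $n-(r-2)$, each is at most $n-(r-2)-(1/3+\eps)n$, which is at most $(2/3-\eps)n-(r-2)$.
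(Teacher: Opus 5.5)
Your overall strategy, ruling out three pairwise disjoint sets of size more than $n/3$, is the right one. However, the partition step, which is the substance of the claim, is not correct as written.

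First, your signs are reversed. In the edge $U+w+u$ with $u\in N(W)$, the set $W$ already contains all $i$ vertices of $h^+_i$ (the $i-1$ in $U$ together with $w$). So $u\in h^-_i(U+u)$, and hence $N(W)\subseteq Y^-_i(U)$, not $Y^+_i(U)$. The rule \eqref{eq:accordance-h-pm} compares two \emph{edges} sharing an $(r-1)$-set: the new vertex inherits the sign of the vertex it replaces. Applied to the edges $U+w+u$ and $U+u+z$, which share $U+u$, it says each $z\in N(U+u)$ inherits the sign of $w$, so $N(U+u)\subseteq Y^+_i(U)$. Your second and third paragraphs assert the opposite inclusions, while your last paragraph uses $N(W)\subseteq Y^-_i(U)$, so the proposal contradicts itself. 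Also, $w\in h^+_i(W)$ is a without-loss-of-generality choice, not a consequence of $i$-goodness.

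Second, ``two of them intersect'' does not give what you need. The intersecting pair might be $N(W)$ and $N(U+u)$, which says nothing about $v$. The missing step is $N(W)\cap N(U+u)=\es$. This follows from the corrected inclusions, because no vertex $z$ lies in both $h^+_i(U+z)$ and $h^-_i(U+z)$. Then $|N(W)\cup N(U+u)|>(2/3+2\eps)n$ forces $N(U+v)$ to meet this union. Your propagation through the shared $(r-1)$-set $U+z$ then places $v$ in $Y^+_i(U)$ if $z\in N(W)$, or in $Y^-_i(U)$ if $z\in N(U+u)$, with the signs on $U$ as in $W$. The vertex $u'$ is never used.

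With these repairs, your argument is a valid variant of the paper's. The paper instead notes that if $U+y$ is not colored with $S_i\times S_{r-i}$, then neither is $U+z$ for any $z\in N(U+y)$. So such a $y$ would generate a third set of size more than $n/3$ disjoint from $Y^\pm_i(U)$. Your version has the small advantage of tracking the signs on $U$ explicitly. The paper's proof leaves this implicit but relies on it in the proof of \cref{colorInBetween}.
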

    \begin{proof}
        Let $W$ be an arbitrary $(r-1)$-set witnessing $U$ and assume that $U = W - v$. We have $v \in Y^+_i(U) \cup Y^-_i(U)$. 
        For the case  $v \in Y^+_i(U)$, by the coloring, we have $N(W) \subseteq Y^-_i(U)$, so $|Y^-_i(U)| \ge \delta_{r-1}(\mH) \ge (1/3+\eps)n$. 
        Then, we can fix an arbitrary vertex $v' \in Y^-_i(U)$. We also have $N(U+v') \subseteq Y^+_i(U)$ by the coloring, so $|Y^+_i(U)| \ge (1/3+\eps)n$. Note that $Y^+_i(U)$ and $Y^-_i(U)$ are disjoint sets in $V(\mH) \sm U$. We then have $|Y^+_i(U)|, |Y^-_i(U)| \le (2/3 - \eps)n - (r-2)$. 
        For the case where $v \in Y^-_i(U)$, a similar proof holds by switching the $+$ and $-$.

        Let $Y' \ce V(\mH) \sm (U \cup Y^+_i(U) \cup Y^-_i(U))$. Suppose for a contradiction that there is $y \in Y'$. Then, by definition, $U+y$ is not colored with $S_i \times S_{r-i}$, so $N(U+y) \cap Y^+_i(U) =N(U+y) \cap Y^-_i(U) = \es$. Hence, $N(U+y) \subseteq Y'$, so $|Y'| \ge (1/3+\eps)n$. However, we have a contradiction that
        \[
            n = |V(\mH)| \ge |Y^+_i(U)| + |Y^-_i(U)| + |Y'| \ge 3 \cdot (1/3+\eps)n > n. \qedhere
        \]
    \end{proof}

    For the convenience of the following discussion, we will also partition $U$ and add it to $Y^+_i(U)$ and $Y^-_i(U)$. We need the following claim.
    \begin{claim} \label{parUindW}
    For every $i$-good $(r-2)$-set $U$ and any two $(r-1)$-sets $W_1,W_2$ witnessing $U$, we have 
    \[
        h_i^+(W_1)\cap U = h_i^+(W_2)\cap U \quad\quad\text{and}\quad\quad 
        h_i^-(W_1)\cap U = h_i^-(W_2)\cap U.
    \]
    \end{claim}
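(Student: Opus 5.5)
The plan is to argue by contradiction using the codegree bound $\delta_{r-1}(\mH)>(1/3+\eps)n$, in the same spirit as the proof of \cref{propertyY}. Fix an $i$-good $(r-2)$-set $U$. For every vertex $v\in V(\mH)\sm U$ such that $U+v$ is colored with $S_i\times S_{r-i}$ and $U+v$ witnesses $U$, record two pieces of data.
\begin{itemize}
\item The \emph{trace} $P(v)\ce\bigl(h_i^+(U+v)\cap U,\ h_i^-(U+v)\cap U\bigr)$, an ordered partition of $U$ into parts of sizes $i-1$ and $r-i-1$.
\item The \emph{sign} of $v$, which is $+$ if $v\in h_i^+(U+v)$ and $-$ otherwise.
\end{itemize}
Since the coloring of $U+v$ is well defined (independent of the edge chosen through it), each such $v$ has exactly one trace and one sign. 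The claim is then exactly that all witnesses $U+v$ have the same trace.

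The key step is a propagation statement. Suppose $U+v$ is a witness with trace $P$ and sign $+$, and let $u\in N(U+v)$. The edge $U+v+u$ is colored with $S_i\times S_{r-i}$. By the definition of $h_i^\pm(U+v)$, its $h_i^+$-part is $(h_i^+(U+v)\cap U)+v$, since $v$ has sign $+$ and $|h_i^+|=i$ forces $u$ into the other part. So $u\in h_i^-(U+v+u)$. Restricting to the $(r-1)$-set $U+u$ then shows that $U+u$ is colored with $S_i\times S_{r-i}$, that it witnesses $U$, that it has trace $P$, and that $u$ has sign $-$. Symmetrically, every neighbor of a sign-$-$ witness vertex with trace $P$ is a sign-$+$ vertex with the same trace $P$. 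Hence, for each trace $P$ that occurs, define
\[
Z_P^+\ce\{v: P(v)=P,\ v \text{ has sign } +\},\qquad Z_P^-\ce\{v: P(v)=P,\ v \text{ has sign } -\}.
\]
If either set is nonempty, the propagation statement together with the codegree bound gives $|Z_P^\mp|\ge\delta_{r-1}(\mH)>(1/3+\eps)n$. Applying it once more from a vertex of $Z_P^\mp$ gives $|Z_P^\pm|>(1/3+\eps)n$ as well.

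Now suppose two witnesses $W_1,W_2$ of $U$ have distinct traces $P_1\ne P_2$. The four sets $Z_{P_1}^\pm$ and $Z_{P_2}^\pm$ are pairwise disjoint, because each vertex has a unique trace and sign. Each has size greater than $n/3$, so their union exceeds $n$, which is a contradiction. In fact two such sets already give more than $2n/3$, and any third one finishes the argument.

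The main obstacle I expect is the case $i=r/2$, where the labels $h_i^+$ and $h_i^-$ are only determined up to a consistent global choice. There I would read the claim modulo this relabeling, or equivalently fix the labeling of each edge to be compatible along $(r-1)$-overlaps as the paper already stipulates. I would also check that the propagation step really uses only \eqref{eq:accordance-h-pm}, applied to the two edges $U+v+u$ and a fixed edge through $U+v$, so that the traces agree on the nose and not merely up to swapping $+$ and $-$.
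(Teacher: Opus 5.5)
Your proof is correct, but it argues differently from the paper. The paper relies on \cref{propertyY}. If $v_1,v_2$ have the same sign, say both lie in $Y^+_i(U)$, then $N(W_1),N(W_2)\subseteq Y^-_i(U)$. That set has size at most $(2/3-\eps)n$, while each neighbourhood has more than $(1/3+\eps)n$ vertices. Hence there is a common neighbour $v$, the edges $W_1+v$ and $W_2+v$ share the $(r-1)$-set $U+v$, and accordance makes both traces equal to $h_i^\pm(U+v)\cap U$. The mixed-sign case is reduced to this one by replacing $W_2$ with $U+v'$ for some $v'\in N(W_2)$, which is exactly your propagation step.

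You instead refine the sign classes by trace. You show each class $Z_P^\pm$ is closed under taking links, so a nonempty one has more than $n/3$ vertices and forces its partner to be nonempty as well. You then finish by counting disjoint large classes, which is the same device the paper uses to show $Y'=\varnothing$ in \cref{propertyY}.

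Your version is self-contained and needs no case split on signs. The paper's is shorter once \cref{propertyY} is available, and it gives the slightly more local fact that two same-sign witnesses of $U$ have a common neighbour.

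The $i=r/2$ case you worry about is not an obstacle. The labels $h_i^\pm$ are fixed consistently by the convention stated after \eqref{eq:accordance-h-pm}, and your propagation step uses only \eqref{eq:accordance-h-pm}. So the traces match exactly, not merely up to swapping $+$ and $-$.
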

    \begin{proof}
        Assume that $U = W_1 - v_1 = W_2 - v_2$. First, consider the case $v_1,v_2\in Y^+_i(U)$. 
        Note that $N(W_1), N(W_2) \subseteq Y^-_i(U)$.
        By \cref{propertyY} and the assumption on $\delta_{r-1}(\mH)$, there is $v \in Y^-_i(U)$ such that $v \in N(W_1) \cap N(W_2)$, so by the coloring, $h_i^+(W_1)\cap U = h_i^+(W_2)\cap U = h_i^+(U+v) \cap U$ and $ h_i^-(W_1)\cap U = h_i^-(W_2)\cap U = h_i^-(U+v)\cap U$. Similar argument holds if $v_1,v_2 \in Y^-_i(U)$. If $v_1 \in Y^+_i(U)$ and $v_2 \in Y^-_i(U)$, then just fix an arbitrary vertex $v' \in N(W_2) \subseteq Y^+_i(U)$ and apply the proof for the case that $v_1,v_2$ are in the same set to $v_1,v'$.
    \end{proof}
    
    By \cref{parUindW}, for an $i$-good $(r-2)$-set $U$, we can fix an arbitrary $(r-1)$-set $W\supset U$ and define $h_i^+(U) \ce h_i^+(W) \cap U$, $h_i^-(U) \ce h_i^-(W) \cap U$. Let $X^+_i(U) \ce h_i^+(U) \cup Y^+_i(U)$ and $X^-_i(U) \ce h_i^-(U) \cup Y^-_i(U)$. We often omit the subscript $i$ when it is clear from the context. By \cref{propertyY}, we have the following properties of $X^+_i(U), X^-_i(U)$.
    \begin{claim} \label{propertyX}
        For every $i$-good $(r-2)$-set $U$, we have 
        \[
            (1/3+\eps)n \le |X^+_i(U)|, |X^-_i(U)| \le (2/3 - \eps)n,
        \]
        and $X^+_i(U), X^-_i(U)$ form a partition of $V(\mH)$.
    \qed
    \end{claim}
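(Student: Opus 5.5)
This claim follows directly from \cref{propertyY} and \cref{parUindW}, so the plan is short bookkeeping. By \cref{parUindW}, the sets $h_i^+(U)$ and $h_i^-(U)$ are well defined. Because any witness $W$ has $|U\cap h_i^+(W)|=i-1$ and $|U\cap h_i^-(W)|=r-i-1$, and $h_i^+(W)$, $h_i^-(W)$ partition $W\supseteq U$, the two sets $h_i^+(U)$ and $h_i^-(U)$ partition $U$ into parts of sizes $i-1$ and $r-i-1$.

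\cref{propertyY} states that $Y^+_i(U)$ and $Y^-_i(U)$ partition $V(\mH)\sm U$. Taking the union of the two partitions, $X^+_i(U)=h_i^+(U)\cup Y^+_i(U)$ and $X^-_i(U)=h_i^-(U)\cup Y^-_i(U)$ form a partition of $U\sqcup (V(\mH)\sm U)=V(\mH)$. Disjointness holds because $h_i^\pm(U)\subseteq U$ while $Y^\pm_i(U)\subseteq V(\mH)\sm U$.

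For the size bounds, the lower bound is immediate: $|X^\pm_i(U)|\ge |Y^\pm_i(U)|\ge (1/3+\eps)n$ by \cref{propertyY}. For the upper bound, I will use $|X^\pm_i(U)|\le |Y^\pm_i(U)|+|U| \le (2/3-\eps)n-(r-2)+(r-2)=(2/3-\eps)n$, again invoking \cref{propertyY} (the paper states $|U|=r-2$). Alternatively, the upper bound follows from the partition together with the lower bound on the complementary part: $|X^+_i(U)|=n-|X^-_i(U)|\le n-(1/3+\eps)n$, which gives the even slightly stronger bound $(2/3-\eps)n$ directly.

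There is no real obstacle here. The only points needing care are that the definition of $h_i^\pm(U)$ does not depend on the chosen witness, which \cref{parUindW} guarantees, and that the pieces coming from $U$ and from $V(\mH)\sm U$ are disjoint.
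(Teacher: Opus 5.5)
Your proof is correct and is exactly the bookkeeping the paper intends: the paper states this claim with no further argument, as an immediate consequence of \cref{propertyY} (with $h_i^\pm(U)$ well defined by \cref{parUindW}). One small nit: the complement route gives exactly $(2/3-\eps)n$, not an ``even slightly stronger'' bound.
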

    
    Now, we analyze the colorings of $(i-1,r-i)$-sets and $(i,r-i-1)$-sets of $(X^+_i(U), X^-_i(U))$.

    \begin{claim} \label{colorInBetween}
        Let $U$ be an $i$-good $(r-2)$-set.
        Suppose $W$ is an $(i, r-i-1)$-set or an $(i-1,r-i)$-set of $(X^+(U), X^-(U))$. Then $W$ is colored with $S_i \times S_{r-i}$, and $h_i^+(W) = W \cap X^+(U)$ and $h_i^-(W) = W \cap X^-(U)$.
    \end{claim}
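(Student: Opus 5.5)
The plan is to move vertices of $U$ out of $W$ one at a time, replacing each by a vertex outside $U$ lying on the same side of the partition, and to track the coloring along the way using accordance. Write $X^\pm=X^\pm_i(U)$.

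\emph{Base case: $U\subseteq W$.} If $W=U+v$ with $v\notin U$, then $W$ is an $(i,r-i-1)$-set exactly when $v\in Y^+(U)$, and an $(i-1,r-i)$-set exactly when $v\in Y^-(U)$. This is because $U$ has $i-1$ vertices in $h_i^+(U)\subseteq X^+$ and $r-i-1$ vertices in $h_i^-(U)\subseteq X^-$. The proof of \cref{propertyY} shows that $U+v$ is colored with $S_i\times S_{r-i}$ for every $v\notin U$. Indeed, $v\in Y^+(U)\cup Y^-(U)$ means $v\in h_i^\pm(U+v)$, which presupposes this coloring. Combining this with \cref{parUindW} (applied with $U+v$ as a witness, since any such set witnesses $U$), we get $h_i^+(U+v)=h_i^+(U)\cup(\{v\}\cap Y^+(U))$. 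This is exactly $W\cap X^+$, and similarly $h_i^-(U+v)=W\cap X^-$.

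\emph{Induction on $|U\setminus W|$.} Take $W$ of the required type with $u\in U\setminus W$. Suppose first that $u\in h_i^+(U)\subseteq X^+$. Choose $w\in W\setminus U$ with $w\in X^+$ if possible; otherwise we would swap at a vertex of $X^-$, and that case is symmetric. Set $W'=W-w+u$. Then $W'$ has the same type as $W$ and is one step closer to $U$, so by induction $W'$ is colored with $S_i\times S_{r-i}$ and $h_i^\pm(W')=W'\cap X^\pm$. The set $Z=W-w=W'-u$ is an $(r-2)$-set. The goal is to show that $Z+w$ inherits the coloring of $Z+u$ with $w$ in the position of $u$, i.e.\ in $h_i^+$. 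By \eqref{eq:accordance-h-pm}, it suffices to find an $(r-1)$-set $T\supseteq Z$ with $T\cup\{u\},T\cup\{w\}$ both edges, or a short chain of such moves.

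The natural choice is to look at the $(r-2)$-set $Z$ itself. The induction hypothesis applied to $W'$ shows that $Z$ is $i$-good, witnessed by $W'$. So \cref{propertyY} and \cref{parUindW} apply to $Z$, giving the partition $X^\pm(Z)$ with $u\in Y^+(Z)$. It therefore suffices to prove $w\in Y^+(Z)$, i.e.\ $Y^+(Z)$ contains $X^+\setminus Z$ up to the relevant vertices.

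The key tool is that the sides are large. For the $(r-1)$-set $Z+u=W'$, we have $N(W')\subseteq Y^-(Z)$. Also, $N(W')\subseteq X^-$ by induction applied to each edge $W'+x$, since every vertex of $N(W')$ must lie on the minus side. Both $Y^-(Z)$ and $X^-$ have size between $(1/3+\eps)n$ and $(2/3-\eps)n$, and they overlap in at least $\delta_{r-1}(\mH)>(1/3+\eps)n$ vertices. Comparing these two partitions should then force $Y^\pm(Z)=X^\pm\setminus Z$. To do this, pick $x$ in a large common part and use \cref{parUindW}-type arguments, i.e.\ the fact that neighborhoods of sets on the plus side land on the minus side in both partitions.

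\emph{Main obstacle.} The hardest step is ruling out a vertex $y\in X^+$ with $y\in Y^-(Z)$. For such a $y$, the link $N(Z+y)$ lies in $Y^+(Z)$. If $Z+y$ can be related to $U$ by induction, then the same link also lies in $X^-$, so it is contained in $Y^+(Z)\cap X^-$. My approach is to count: the parts $Y^+(Z)\cap X^+$ and $Y^-(Z)\cap X^-$ each already contain a full codegree neighborhood. Adding a third disjoint codegree-sized set $Y^+(Z)\cap X^-$ would exceed $n$, exactly as in the $1/3$ counting of \cref{onlySi} and \cref{propertyY}. I expect most of the care to go into making sure that each set whose neighborhood is used is already covered by the induction hypothesis.
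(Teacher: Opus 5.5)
Your base case is fine, but the induction on $|U\setminus W|$ does not close.

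First, $Z=W'-u$ is witnessed by $W'$ only in two situations: a plus-swap on an $(i,r-i-1)$-set, or a minus-swap on an $(i-1,r-i)$-set. A plus-swap on an $(i-1,r-i)$-set leaves $Z$ with $i-2$ vertices in $h_i^+(W')$. A minus-swap on an $(i,r-i-1)$-set leaves it with $i$. In both cases $W'$ does not witness $Z$ (indeed $Z$ is not $i$-good), so \cref{propertyY,parUindW} are unavailable. Such swaps are sometimes forced. For example, $W=U-b+a'+b'$ with $b\in h_i^-(U)$, $a'\in X^+\setminus U$, $b'\in X^-\setminus U$ is an $(i,r-i-1)$-set whose only available swap is $b\leftrightarrow b'$. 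Calling the other case ``symmetric'' does not help. Exchanging $\pm$ and $i\leftrightarrow r-i$ also exchanges the two types of $W$, so it maps bad configurations to bad ones.

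Second, and more seriously, even when $Z$ is $i$-good you do not prove the actual content of the claim: that $w\in Y^+(Z)$, i.e.\ that $(Y^+(Z),Y^-(Z))$ agrees with $(X^+,X^-)$ off $Z$. Two partitions of $V(\mathcal{H})$ with all parts of size between $(1/3+\varepsilon)n$ and $(2/3-\varepsilon)n$ whose minus sides share more than $n/3$ vertices need not coincide. Your counting needs $N(Z+y)\subseteq X^-$ for $y\in X^+\cap Y^-(Z)$, plus a full codegree neighborhood inside $Y^+(Z)\cap X^+$. But for $y\notin U$ we have $|U\setminus(Z+y)|=|U\setminus W|$. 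So these $(r-1)$-sets lie at the same level as $W$ (indeed $W=Z+w$ is one of them), and the induction hypothesis says nothing about them. This circularity is the crux, and the proposal leaves it open.

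The missing idea is what the paper does in \cref{goodMoveOne}. Move the $(r-2)$-set instead of the $(r-1)$-set, and prove that one swap preserves the partition by a local argument rather than by counting. Suppose $U$ is $i$-good, $x\in h_i^+(U)$ and $v\in X^+(U)\setminus U$. Then $U'=U-x+v$ is $i$-good (witnessed by $U+v$), and $X^\pm(U')=X^\pm(U)$. Otherwise some $y\notin U\cup U'$ lies in $X^+(U)\cap X^-(U')$ or in $X^-(U)\cap X^+(U')$. Then $U''=U-x+y=U'-v+y$ is $i$-good, witnessed by $U+y$ or by $U'+y$. Moreover, $y$ lies in $h_i^+$ of one of $U''+x=U+y$ and $U''+v=U'+y$ but in $h_i^-$ of the other. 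This contradicts the fact that the partition induced on $U''$ does not depend on the superset (\cref{parUindW}). Iterating this step and its minus-side analogue shows that every $(i-1,r-i-1)$-subset $U'$ of $W$ is $i$-good with $X^\pm(U')=X^\pm(U)$. Then $W=U'+v$ is just your base case applied to $U'$.
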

    \begin{proof}
        We first have the following claim.
    \begin{subclaim} \label{goodMoveOne}
        For every $i$-good $(r-2)$-set $U$, let $x$ be an arbitrary vertex in $h^+_i(U)$ and $v$ be an arbitrary vertex in $X^+(U) \sm U$. We have that $U - x + v$ is also $i$-good. Moreover, $X^+(U-x+v) = X^+(U)$, and $X^-(U-x+v) = X^-(U)$.
    \end{subclaim}

    \begin{proof}
        Let $U'$ be the $(r-2)$-set $U - x + v$ and $W$ be the $(r-1)$-set $U \cup U' = U + v$. \cref{propertyY} implies that $W$ is colored with $S_i \times S_{r-i}$, $|U \cap h_i^+(W)| = i - 1$, and $|U \cap h_i^-(W)| = r-i-1$. Then, by the assumption on $x$ and $v$, we have 
        $|U' \cap h_i^+(W)| = |U \cap h_i^+(W)| = i-1$ and $|U' \cap h_i^-(W)| = |U \cap h_i^-(W)| = r-i-1$. Therefore, $W$ witnesses $U'$, hence $U'$ is $i$-good. Note that $v \in h^+_i(U')$.

        By \cref{propertyX}, we have that $X^+(U')$ and $X^-(U')$ also form a partition of $V(\mH)$. Suppose for a contradiction that $X^+(U') \neq X^+(U)$ or $X^-(U') \neq X^-(U)$. Then, there is a vertex $y$ that is in $X^+(U) \cap X^-(U')$ or in $X^-(U) \cap X^+(U')$. Note that $y \notin U \cup U'$.
        Let $U'' \ce U - x + y = U' - v + y$. We first claim that $U''$ is $i$-good. Indeed, similarly to the proof that $U'$ is good, if $y \in X^+(U)$, then $U+y = U'' + x$ witnesses that $U''$ is good; if $y \in X^+(U')$, then $U' + y = U'' + v$ witnesses that $U''$ is good.
        Now, by \cref{parUindW}, $y \in U''$ should be of the same coloring in all $(r-1)$-sets containing $U''$, which, in particular, means that it is the same in 
        $U'' + x = U + y$ and in $U''+ v = U' + y$.
        However, by the assumption that either $y \in X^+(U) \cap X^-(U')$ or $y \in X^-(U) \cap X^+(U')$, we have that $y$ should be of different colors in these two sets, a contradiction. \qedhere
    \end{proof}

    An analogous argument yields the following claim.
    \begin{subclaim} \label{goodMoveOne-}
        For every $i$-good $(r-2)$-set $U$, let $x$ be an arbitrary vertex in $h^-_i(U)$ and $v$ be an arbitrary vertex in $X^-(U) \sm U$. We have that $U - x + v$ is also $i$-good. Moreover, $X^+(U-x+v) = X^+(U)$, and $X^-(U-x+v) = X^-(U)$.\qed
    \end{subclaim}

    Now, we prove \cref{colorInBetween}. We give the proof for the case where $W$ is an $(i,r-i-1)$-set of $(X^+(U), X^-(U))$; the second case is similar. Let $U'$ be an arbitrary $(i-1,r-i-1)$-set of $(X^+(U), X^-(U))$ contained in $W$. Note that $U'$ can be obtained from $U$ by replacing some vertices in $h_i^+(U)$ by the same number of vertices in $X^+(U) \sm U$ and then replacing some vertices in $h_i^-(U)$ by the same number of vertices in $X^-(U) \sm U$. Using \cref{goodMoveOne} a finite number of times, we conclude that $U'$ is $i$-good, $X^+(U') = X^+(U)$, and $X^-(U') = X^-(U)$. Then, the claim follows from the definition of $X^+(U')$. \qedhere
    \end{proof}

    \begin{claim} \label{itoi+2i-2}
    We have the following claims.\\
        (1) For every integer $i \in [1,r-3]$, if $S_i \times S_{r-i}$ is used in $\mH$, then $S_{i+2} \times S_{r-i-2}$ is used in $\mH$.\\
        (2) For every integer $i \in [3, r-1]$, if $S_i \times S_{r-i}$ is used in $\mH$, then $S_{i-2} \times S_{r-i+2}$ is used in $\mH$.
    \end{claim}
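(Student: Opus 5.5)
Since $h_i^\pm = h_{r-i}^\mp$ and the color $S_i\times S_{r-i}$ coincides with $S_{r-i}\times S_i$, part (2) for $i$ is part (1) applied to $r-i$ after swapping the roles of $+$ and $-$. So we only prove (1). Fix $i\in[1,r-3]$ and an edge $e$ colored with $S_i\times S_{r-i}$. Remove one vertex of $h_i^+(e)$ and one of $h_i^-(e)$ (both parts are nonempty since $1\le i\le r-1$). The resulting $(r-2)$-set $U$ is $i$-good, witnessed by $e$ minus either removed vertex. \cref{propertyX} then gives a partition $V(\mH)=X^+\sqcup X^-$ with $X^\pm=X^\pm_i(U)$ and $(1/3+\eps)n\le|X^\pm|\le(2/3-\eps)n$. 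The goal is to show that $(X^+,X^-)$ satisfies the hypothesis of \cref{manyToColoring} with $i+2$ in place of $i$, taking $X=X^+$ and $Y=X^-$. Both sets have size at least $r$ because $n$ is large.

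\textbf{Step 1: no $(i+1,r-i-1)$-edges and no $(i-1,r-i+1)$-edges.} Suppose $f$ is an $(i+1,r-i-1)$-edge of $(X^+,X^-)$. Then $f$ contains an $(i,r-i-1)$-subset $W$, and by \cref{colorInBetween}, $W$ is colored $S_i\times S_{r-i}$ with $h_i^+(W)=W\cap X^+$ and $h_i^-(W)=W\cap X^-$. Since $|h_i^+(W)|=i$ already, the added vertex of $f$ must lie in $h_i^-(f)$, so $h_i^-(N(W))$ is forced. Compare with the neighbourhood description: by the definition of $Y^\pm$ applied to an $i$-good $(i-1,r-i-1)$-subset $U'\subset W$, where $X^\pm(U')=X^\pm$ by \cref{goodMoveOne}, a neighbour $u$ of $W=U'+x$ with $x\in X^+$ must satisfy $u\in h_i^-(W+u)$. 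The way $u$ is colored in $U'+x+u$ matches how it is colored in $U'+u$ via accordance applied to the $(r-1)$-set $U'+u$, which gives $u\in Y^-(U')=X^-$. Hence $N(W)\subseteq X^-$, contradicting that the extra vertex of $f$ lies in $X^+$. The $(i-1,r-i+1)$ case is symmetric, using an $(i-1,r-i)$-subset.

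\textbf{Step 2: degree counting.} Now take any $(i+1,r-i-2)$-set $W$ of $(X^+,X^-)$; this uses $i+1\le r-2$, so $r-i-2\ge1$. Adding a vertex of $X^-$ would create an $(i+1,r-i-1)$-edge, which Step 1 forbids. Therefore $N(W)\subseteq X^+$, and $d_{X^+}(W)\ge(1/3+\eps)n$. We need this to exceed $|X^+|/2$, but $|X^+|/2$ can be as large as $(1/3-\eps/2)n$, so the bound holds. \cref{manyToColoring}, applied with $i+2$, $X=X^+$ and $Y=X^-$, then shows that $S_{i+2}\times S_{r-i-2}$ is used. The case $i+2=r-1$ fits the lemma since $i+2\in[r-1]$.

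\textbf{Main obstacle.} The delicate point is Step 1: rigorously showing $N(W)\subseteq X^-$ for an $(i,r-i-1)$-set $W$. This means identifying, via \cref{goodMoveOne} and \cref{goodMoveOne-}, an $i$-good $U'\subset W$ with the same partition, and checking that the membership $h_i^\pm$ of a new vertex is governed by $Y^\pm(U')$. I would first verify this in the base case $W\supseteq U$ and then transport it along the replacement moves.
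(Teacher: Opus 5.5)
Your proposal is correct and follows the paper's argument. You fix an $i$-good $U$ inside an edge colored $S_i\times S_{r-i}$ and use \cref{colorInBetween} to rule out $(i+1,r-i-1)$-edges of $(X^+(U),X^-(U))$; then every $(i+1,r-i-2)$-set has its whole neighbourhood in $X^+(U)$, with size exceeding $|X^+(U)|/2$, and \cref{manyToColoring} finishes. Your explicit check that $N(W)\subseteq X^-$ and your reduction of (2) to (1) via $S_i\times S_{r-i}=S_{r-i}\times S_i$ simply spell out what the paper calls ``by the coloring'' and ``analogously''.
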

    \begin{proof}
        For (1), see \cref{fig::itoi+2} for a figure of the proof. Let $e$ be an edge colored with $S_i \times S_{r-i}$. We can fix $U$ to be an $i$-good $(r-2)$-set in $e$. By \cref{colorInBetween}, for every $(i, r-i-1)$-set $W$ of $(X^+(U), X^-(U))$, we have that $W$ is colored with $S_i \times S_{r-i}$, $h^+_i(W) = W \cap X^+(U)$, and $h^-_i(W) = W \cap X^-(U)$. Hence, by the coloring, no $(i+1,r-i-1)$-set can be an edge in $\mH$, so for every $(i+1,r-i-2)$-set $W'$ of $(X^+(U),X^-(U))$, we have
        \[
            d_{X^+(U)}(W') = d(W') \ge \drmh \stackrel{\cref{propertyX}}{>} |X^+(U)| / 2.
        \]
        Therefore, by \cref{manyToColoring}, $S_{i+2} \times S_{r-i-2}$ is used.
        Statement (2) is proved analogously. \qedhere
    \end{proof}
\begin{figure}[ht]
  \centering
  \resizebox{0.48\linewidth}{!}{
    \begin{tikzpicture}[line cap=round, line join=round, font=\small, yscale=0.8]

      \draw[thick] (0,0) rectangle (8,2.2);
      \node[anchor=north east] at (7.9,2.15) {$X^-(U)$};

      \draw[thick] (0,2.8) rectangle (8,5.0);
      \node[anchor=north east] at (7.9,4.95) {$X^+(U)$};

      \coordinate (a1) at (1.1,3.8);
      \coordinate (a2) at (2.7,3.8);
      \coordinate (a3) at (4.3,3.8);
      \coordinate (a4) at (5.9,3.8);
      \node at (a1) {$+$};
      \node at (a2) {$+$};
      \node at (a3) {$+$};
      \node at (a4) {$+$};

      \coordinate (b1) at (1.9,1.05);
      \coordinate (b2) at (2.96,1.05);
      \coordinate (b3) at (4.04,1.05);
      \coordinate (b4) at (5.1,1.05);
      \node at (b1) {$-$};
      \node at (b2) {$-$};
      \node at (b3) {$-$};
      \node at (b4) {$-$};
      
      \draw[thick, rounded corners=6pt] (0.6,4.55) -- (4.95,4.55) -- (5.65,0.35) -- (1.3,0.35) -- cycle;
      \draw[thick, rounded corners=6pt] (2.3,4.2) -- (6.3,4.2) -- (5.3,0.7) -- (1.3,0.7) -- cycle;
      
      \draw[dashed, thick, rounded corners=6pt] (0.35,0.2) rectangle (6.55,4.8);
    \end{tikzpicture}
  }
  \caption{$r = 8$ and $i = 3$. Let $e$ be an edge colored with $S_3 \times S_5$ and $U$ be a $3$-good set in $e$. By \cref{colorInBetween}, we have that all the $(3,4)$-sets of $(X^+(U), X^-(U))$ are of $S_3 \times S_5$, in which vertices in $X^+(U)$ are of $S_3$ and vertices in $X^-(U)$ are of $S_5$. 
  Hence, by the coloring, no $(4,4)$-set of $(X^+(U),X^-(U))$ can be an edge.}
  \label{fig::itoi+2}
\end{figure}
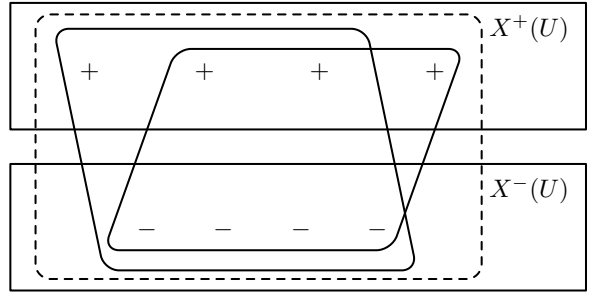
    \begin{claim} \label{nor-2}
        $S_{r-2}\times S_2$ is not used in $\mH$.
    \end{claim}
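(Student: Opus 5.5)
The plan is to argue by contradiction, in the style of \cref{le1over2} and \cref{propertyY}: find three pairwise disjoint vertex sets, each of size more than $(1/3+\eps)n$.

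Suppose some edge $e$ is colored with $S_{r-2}\times S_2$. Write $e=A\sqcup\{b_1,b_2\}$ with $A=h^+_{r-2}(e)$, so $|A|=r-2$ and $h^-_{r-2}(e)=\{b_1,b_2\}$. Let $U\ce A$; this $(r-2)$-set consists entirely of ``$+$'' vertices.

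The key structural point is that the two-element part $S_2$ is small. For a vertex $v\notin U$, the set $U+v$ cannot have $v$ in the $+$ part together with all of $U$, because that part has only $r-2$ slots. So define
\[
Z\ce\{v\notin U:\ U+v \text{ is colored with } S_{r-2}\times S_2 \text{ and } h^+_{r-2}(U+v)=U\}.
\]
By accordance \cref{eq:accordance-h-pm}, if $v\in Z$ and $U+v+w\in\mH$, then $w\in h^-_{r-2}(U+v+w)$, so $w\in Z$ as well. Hence the link graph of $U$ touching $Z$ lives inside $Z$, and $|Z|\ge d(U+b_1)>(1/3+\eps)n$.

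Next I would produce a second and a third disjoint set from $(r-1)$-sets that contain both $b_1$ and $b_2$. Take $a\in A$ and $X_1\ce N(e-a)$. Every $x\in X_1$ lies in $h^+_{r-2}(e-a+x)$, so $|X_1|>(1/3+\eps)n$.
\begin{itemize}
\item I would first show $X_1\cap Z=\es$. Suppose $x\in X_1\cap Z$. Then $x$ is a $+$ vertex in the edge $e-a+x$ but a $-$ vertex in every edge $U+x+w$. Using codegree $>n/3$ together with the size bounds already obtained, I would find a common neighbour of two $(r-1)$-sets, one from each of these edges, and derive a contradiction exactly as in \cref{manyToColoring} or \cref{parUindW}. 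The contradiction comes from the fact that a transposition swapping a $+$ vertex with a $-$ vertex is not in $S_{r-2}\times S_2$.
\item For the third set, I would use the $(r-2)$-set $U'\ce A-a+b_1$ and, as in \cref{propertyY}, split $V(\mH)\sm U'$ according to whether $v$ is $+$ or $-$ in $U'+v$. The $-$ side pairs $v$ with $b_1$. I expect this side (or the neighbourhood of a suitable $(r-1)$-set $A-a+b_1+x$ with $x\in X_1$) to be disjoint from both $Z$ and $X_1$.
\end{itemize}

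Once three disjoint sets of size more than $(1/3+\eps)n$ are in hand, we get $n>n$, a contradiction.

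The main obstacle is the disjointness checks. Accordance compares colorings only across edges that share an $(r-1)$-set. Whenever two edges share just $r-2$ vertices, I must use the codegree bound to find an intermediate common neighbour, as in the proof of \cref{parUindW}. I expect the check that the third set avoids $Z$ to be the delicate one, since it involves three $(r-1)$-sets. If the direct choice of a third set fails, my fallback is different. I would show that $U'$ is $(r-2)$-good, apply \cref{colorInBetween} to the partition $(X^+(U'),X^-(U'))$, and note that all $(r-2,1)$-sets are then colored $S_{r-2}\times S_2$ with the single $X^-$ vertex being $-$. That forces the link of every $(r-2,0)$-set into $X^-(U')$, and I would then argue that $Z$, forced to be contained in $X^-(U')$, leaves room for a third large class, contradicting \cref{propertyX}.
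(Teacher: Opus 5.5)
\textbf{There is a genuine gap: you never produce the third large set, and both of your concrete candidates provably meet $Z$.}

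With $U'=A-a+b_1$, the $-$ side $Y^-_{r-2}(U')$ contains $b_2$, since $U'+b_2=e-a$. It also contains all of $N(A+b_1)$. Each $w\in N(A+b_1)$ is $-$ in the edge $A+b_1+w$, hence $-$ in both $A+w$ and $U'+w$, so $w\in Z\cap Y^-_{r-2}(U')$. In fact $U'$ is $(r-2)$-good (witnessed by $A+b_1$) with $A\subseteq X^+(U')$. So \cref{colorInBetween}, applied to the $(r-2,1)$-sets $A+v$, gives $X^-(U')\subseteq Z$.

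Your other candidate is no better. Every $x\in X_1=N(U'+b_2)$ is $+$ in $U'+x$, so $N(U'+x)\subseteq Y^-_{r-2}(U')\subseteq Z$. Moreover, $X_1$ itself lies in $Y^+_{r-2}(U')$. So a third set disjoint from $Z$ and $X_1$ would have to live in $X^+(U')\sm X_1$, and your plan has no mechanism for producing large sets inside $X^+(U')$.

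Separately, the common-neighbour argument you sketch for $X_1\cap Z=\es$ fails as stated. For $T=A-a+x$, the neighbourhoods of $T+b_j$ and of $T+a$ lie in different classes of $T$: all $-$ with $T$ entirely $+$, versus $-$ with $T$ split $r-3$ to $1$. So no counting forces them to meet. You would need a three-class count as at the end of \cref{propertyY}, also using $N(T+w)$.

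\textbf{Your fallback starts exactly as the paper does, but it omits the step that finishes the proof.} The paper also takes a good $(r-2)$-set in $e$ and applies \cref{colorInBetween}. The missing step is this:
\begin{itemize}
\item Every $(r-2,1)$-set of $(X^+(U'),X^-(U'))$ already has its $S_{r-2}$ part filled by its vertices in $X^+(U')$, so there are no $(r-1,1)$-edges.
\item Equivalently, every $(r-1)$-subset of $X^+(U')$ has its whole neighbourhood inside $X^+(U')$. You recorded the fact about links of $(r-2,1)$-sets but not this consequence.
\item Hence $\mH[X^+(U')]$ is a $\Ckrh$-free $r$-graph with minimum codegree above $(1/3+\eps)n$ on at most $(2/3-\eps)n$ vertices (\cref{propertyX}). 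Its relative codegree is bounded away above $1/2$, contradicting \cref{le1over2}.
\end{itemize}
Concretely, running the two-class argument of \cref{le1over2} on an edge inside $X^+(U')$ gives two disjoint subsets of $X^+(U')$, each of size above $n/3$. Together with $X^-(U')$, these are the three disjoint sets you wanted.

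Neither $Z$ nor $X_1$ plays any role. The assertion that $Z\subseteq X^-(U')$ ``leaves room for a third large class'' does not substitute for this step: the third class has to be constructed inside $X^+(U')$, and \cref{le1over2} is what does it.
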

    \begin{proof}
        Suppose for a contradiction that there is an edge $e \in E(\mH)$ colored with $S_{r-2}\times S_2$. In $e$, we can fix a good $(r-2)$-set $U$. As in the proof of \cref{itoi+2i-2}, we claim that no $(r-1,1)$-set of $(X^+(U), X^-(U))$ can be an edge. Indeed,
        by \cref{colorInBetween}, for every $(r-2, 1)$-set of $(X^+(U), X^-(U))$, we have that $W$ is colored with $S_{r-2} \times S_2$, $h^+_{r-2}(W) = W \cap X^+(U)$, and $h^-_{r-2}(W) = W \cap X^-(U)$. Therefore, by the coloring, no $(r-1,1)$-set can be an edge in $\mH$, as claimed.
        
        Now, for every $(r-1)$-set $W$ of $X^+(U)$, we have $d(W) = d_{X^+(U)}(W)$.
        However, \cref{le1over2} yields $\drmh[\mH[X^+(U)]] \le (1/2+o(1))|X^+(U)|$. Thus, there is an $(r-1)$-set $W$ of $X^+(U)$ such that 
            \[
            d(W) = 
            d_{X^+(U)}(W) \le (1/2+o(1)) \cdot |X^+(U)|
            \stackrel{\cref{propertyX}}{\le} (1/2+o(1)) \cdot (2/3-\eps) n < n/3,
            \]
            a contradiction.
    \end{proof}

    Fix an arbitrary edge $e \in E(\mH)$ and assume that $e$ is colored with $S_{i_0} \times S_{r-i_0}$, so in particular, we have that $S_{i_0} \times S_{r-i_0}$ is used in $\mH$. By \cref{itoi+2i-2,nor-2}, we have that $\{i_0 + 2j : j \in \mathbb{Z}\} \cap [1,r-1] = \{1,3,\ldots, r-1\}$. Therefore, $r$ is even and $\sit$ is available for every odd $i \in [1,r-1]$.
    \qedhere

    \end{proof}

\section{Concluding remarks} \label{sec::conRem}
It is natural to try to push the method of this paper further in order to show that some tight cycles $\C$ have smaller codegree Tur\'an density.
One major difficulty is that, once the minimum codegree drops below $n/3$, it is no longer clear how to obtain a vertex partition as in \cref{propertyX}. 
Moreover, even if one could give an appropriate partition of the vertex set, it seems difficult to derive an analogue of \cref{itoi+2i-2}. 
With only two parts, the absence of certain edges can immediately force the existence of certain types of edges, due to the minimum codegree assumption. However, with three or more parts, then there are many different ways to satisfy the minimum codegree requirement.

As discussed in~\cite[Section 7]{sankar2024turan}, results akin to \cref{coloringLemma} in \cref{sec::ckrfree} hold for the far broader family of ``twisted'' tight cycles, and there are additional coloring results for tight cycles minus edges. We believe that the oriented coloring approach holds a lot of promise for other extremal problems regarding hypergraphs free of these families. Any further results in this direction, particularly determining the (codegree) Tur\'an densities of more cycles in these families would be very interesting.

\paragraph*{Acknowledgments.}
This work was partially done when Luo and Sankar attended the 2025 Joint Mathematics Meetings and when all authors participated in the IAS/Park City Mathematics Institute in Summer 2025.

\setlength{\bibsep}{0pt}


\begin{thebibliography}{10}

\bibitem{balogh2022hypergraph}
J.~Balogh, F.~C. Clemen, and B.~Lidick\'{y}.
\newblock Hypergraph {T}ur\'{a}n problems in {$\ell_2$}-norm.
\newblock In {\em Surveys in combinatorics 2022}, volume 481 of {\em London
  Math. Soc. Lecture Note Ser.}, pages 21--63. Cambridge Univ. Press,
  Cambridge, 2022.

\bibitem{balogh2026positive}
J.~Balogh, A.~Halfpap, B.~Lidick\'{y}, and C.~Palmer.
\newblock Positive co-degree densities and jumps.
\newblock {\em Innov. Graph Theory}, 3:1--36, 2026.

\bibitem{balogh2024turan}
J.~Balogh and H.~Luo.
\newblock Tur\'{a}n density of long tight cycle minus one hyperedge.
\newblock {\em Combinatorica}, 44(5):949--976, 2024.

\bibitem{bodnar2025turan}
L.~Bodn{\'a}r, J.~Le{\'o}n, X.~Liu, and O.~Pikhurko.
\newblock The {T}ur{\'a}n density of short tight cycles.
\newblock {\em arXiv:2506.03223}, 2025.

\bibitem{bucic2023uniform}
M.~Buci\'{c}, J.~W. Cooper, D.~Kr\'{a}\v{l}, S.~Mohr, and D.~Munh\'{a}~Correia.
\newblock Uniform {T}ur\'{a}n density of cycles.
\newblock {\em Trans. Amer. Math. Soc.}, 376(7):4765--4809, 2023.

\bibitem{czygrinow2001codegree}
A.~Czygrinow and B.~Nagle.
\newblock A note on codegree problems for hypergraphs.
\newblock {\em Bull. Inst. Combin. Appl.}, 32:63--69, 2001.

\bibitem{erdos1964extremal}
P.~Erd\H{o}s.
\newblock On extremal problems of graphs and generalized graphs.
\newblock {\em Israel J. Math.}, 2:183--190, 1964.

\bibitem{erdos1966limit}
P.~Erd\H{o}s and M.~Simonovits.
\newblock A limit theorem in graph theory.
\newblock {\em Studia Sci. Math. Hungar.}, 1:51--57, 1966.

\bibitem{erdos1946structure}
P.~Erd\H{o}s and A.~H. Stone.
\newblock On the structure of linear graphs.
\newblock {\em Bull. Amer. Math. Soc.}, 52:1087--1091, 1946.

\bibitem{falgasravry2023codegree}
V.~Falgas-Ravry, O.~Pikhurko, E.~Vaughan, and J.~Volec.
\newblock The codegree threshold of {$K^-_4$}.
\newblock {\em J. Lond. Math. Soc. (2)}, 107(5):1660--1691, 2023.

\bibitem{falgasravry2012turan}
V.~Falgas-Ravry and E.~R. Vaughan.
\newblock Tur\'{a}n {$H$}-densities for 3-graphs.
\newblock {\em Electron. J. Combin.}, 19(3):Paper 40, 26, 2012.

\bibitem{frankl1984exact}
P.~Frankl and Z.~F\"{u}redi.
\newblock An exact result for {$3$}-graphs.
\newblock {\em Discrete Math.}, 50(2-3):323--328, 1984.

\bibitem{han2021covering}
J.~Han, A.~Lo, and N.~Sanhueza-Matamala.
\newblock Covering and tiling hypergraphs with tight cycles.
\newblock {\em Combin. Probab. Comput.}, 30(2):288--329, 2021.

\bibitem{kamcev2024turan}
N.~Kam\v{c}ev, S.~Letzter, and A.~Pokrovskiy.
\newblock The {T}ur\'{a}n density of tight cycles in three-uniform hypergraphs.
\newblock {\em Int. Math. Res. Not. IMRN}, (6):4804--4841, 2024.

\bibitem{keevash2011hypergraph}
P.~Keevash.
\newblock Hypergraph {T}ur\'{a}n problems.
\newblock In {\em Surveys in combinatorics 2011}, volume 392 of {\em London
  Math. Soc. Lecture Note Ser.}, pages 83--139. Cambridge Univ. Press,
  Cambridge, 2011.

\bibitem{keevash2007codegree}
P.~Keevash and Y.~Zhao.
\newblock Codegree problems for projective geometries.
\newblock {\em J. Combin. Theory Ser. B}, 97(6):919--928, 2007.

\bibitem{ma2024codegree}
J.~Ma.
\newblock On codegree {T}ur{\'a}n density of the 3-uniform tight cycle
  {$C_{11}$}.
\newblock {\em arXiv:2409.02765}, 2024.

\bibitem{ma2025codegree}
J.~Ma and M.~Rong.
\newblock The codegree {T}ur{\'a}n density of tight cycles.
\newblock {\em arXiv:2512.23011}, 2025.

\bibitem{mubayi2011hypergraph}
D.~Mubayi, O.~Pikhurko, and B.~Sudakov.
\newblock Hypergraph {T}ur{\'a}n problem: some open questions.
\newblock In {\em AIM workshop problem lists}, page 166, 2011.

\bibitem{mubayi2007codegree}
D.~Mubayi and Y.~Zhao.
\newblock Co-degree density of hypergraphs.
\newblock {\em J. Combin. Theory Ser. A}, 114(6):1118--1132, 2007.

\bibitem{piga2026codegree}
S.~Piga, N.~Sanhueza-Matamala, and M.~Schacht.
\newblock The codegree {T}ur\'{a}n density of 3-uniform tight cycles.
\newblock {\em J. Combin. Theory Ser. B}, 176:1--6, 2026.

\bibitem{razborov2007flag}
A.~A. Razborov.
\newblock Flag algebras.
\newblock {\em J. Symbolic Logic}, 72(4):1239--1282, 2007.

\bibitem{sankar2024turan}
M.~Sankar.
\newblock The {T}ur{\'a}n {D}ensity of 4-{U}niform {T}ight {C}ycles.
\newblock {\em arXiv:2411.01782}, 2024.

\bibitem{sankar2025topological}
M.~Sankar.
\newblock {\em Topological {A}pproaches to {E}xtremal {C}ombinatorics}.
\newblock PhD thesis, Stanford University, 2025.

\bibitem{turan1961research}
P.~Tur\'{a}n.
\newblock Research problems.
\newblock {\em Magyar Tud. Akad. Mat. Kutat\'{o} Int. K\"{o}zl.}, 6:417--423,
  1961.

\end{thebibliography}
\end{document}